\newtheorem{theorem}{Theorem}[section]
\newtheorem{lemma}[theorem]{Lemma}
\newtheorem{corollary}[theorem]{Corollary}
\newif\ifAppendixlink
\newcommand{\appendixref}[1]{%
  \ifAppendixlink
    \ref{#1}%
  \else
    \ref*{#1}%
  \fi
}
\newcommand{\RR}{\mathbb{R}}
\newcommand{\paren}[1]{\left( #1 \right)}
\newcommand{\br}[1]{\left[ #1 \right]}
\newcommand{\abs}[1]{\left| #1 \right|}
\newcommand{\ceil}[1]{\lceil#1\rceil}
\newcommand{\set}[1]{\{#1\}}
\newcommand{\gen}[1]{\langle #1 \rangle}
\newcommand{\norm}[1]{\left\lVert#1\right\rVert}
\newcommand{\just}[1]{\textrm{(#1)}}
\newcommand{\rt}[1]{\textrm{root}(#1)}
\newcommand{\scr}[1]{\mathcal{#1}}
\newcommand{\pluseq}{\mathrel{+}=}
\newcommand{\noiter}{\#\textrm{it}}
\newcommand{\timeseq}{\mathrel{*}=}
\newcommand{\diveq}{\mathrel{/}=}
\DeclareMathOperator*{\startimes}{\scalerel*{\circledast}{\sum}}
\DeclareMathOperator*{\argmin}{arg\,min}
\DeclareMathOperator*{\diag}{diag}
\newcommand\Mark[1]{\textsuperscript{#1}}
\title{Fast Exact Leverage Score Sampling from Khatri-Rao Products
with Applications to Tensor Decomposition}
\author{%
  Vivek Bharadwaj \Mark{1,2}, Osman Asif Malik \Mark{2}, Riley Murray
  \Mark{3,2,1}, \\
  \bf{Laura Grigori \Mark{4}, Ayd{\i}n Bulu\c{c} \Mark{2, 1}, 
  James Demmel \Mark{1}}\\
  \Mark{1}Electrical Engineering and Computer Science Department, UC Berkeley\\
  \Mark{2}Computational Research Division, Lawrence Berkeley National Lab\\
  \Mark{3}International Computer Science Institute \\
  \Mark{4} Institute of Mathematics, EPFL \& Lab for Simulation and Modelling, Paul Scherrer Institute 
}
\begin{document}

\maketitle

\begin{abstract}
We present a data structure 
to randomly sample rows from the Khatri-Rao
product of several matrices according to the exact distribution of its leverage scores. 
Our proposed sampler draws each row in time logarithmic in the height of 
the Khatri-Rao product and quadratic in its column count, with persistent space overhead
at most the size of the input matrices. As a result, it tractably draws samples 
even when the matrices forming the Khatri-Rao product have tens of millions of rows each. When used to sketch the linear least squares problems arising in CANDECOMP / PARAFAC 
tensor decomposition, our method achieves lower asymptotic complexity 
per solve than recent state-of-the-art methods. Experiments on billion-scale sparse tensors validate our claims, with our algorithm 
achieving higher accuracy than competing methods as the 
decomposition rank grows.
\end{abstract}

\section{Introduction}
\label{sec:introduction}
The Khatri-Rao product (KRP, denoted by $\odot$) is the 
column-wise Kronecker product of two matrices, and
it appears in diverse applications across numerical analysis and machine learning
\cite{krp_survey}. We examine overdetermined linear
least squares problems of the form $\min_X \norm{AX - B}_F$, where the design 
matrix $A = U_1 \odot ... \odot U_N$ 
is the Khatri-Rao product of matrices $U_j \in \RR^{I_j \times R}$.
These problems appear prominently in signal 
processing \cite{krp_signal_processing}, 
compressed sensing \cite{mimo_radar_krp}, 
inverse problems related to partial differential
equations \cite{pde_inverse_sketching}, 
and alternating least squares (ALS)
CANDECOMP / PARAFAC (CP) tensor decomposition \cite{kolda_tensor_overview}.
In this work, we focus on the case where $A$ has moderate column count
(several hundred at most). Despite this, the problem remains formidable 
because the height of $A$ is $\prod_{j=1}^N I_j$. For
row counts $I_j$ in the millions, it is intractable 
to even materialize $A$ explicitly.

Several recently-proposed randomized sketching algorithms can approximately 
solve least squares problems with Khatri-Rao product design matrices
\cite{battaglino_practical, jin_faster_2020, larsen_practical_2022,malik_efficient_2022, woodruff_zandieh}. 
These methods apply a sketching operator $S$ to the design 
and data matrices to solve the reduced least squares problem
$\min_{\tilde X} \norm{SA \tilde X - SB}_F$, where $S$ has far fewer rows than columns. For appropriately chosen $S$, the residual of the downsampled system 
falls within a specified tolerance $\varepsilon$ of the optimal residual with high probability $1 - \delta$. 
In this work, we constrain $S$ to be a \textit{sampling matrix} that 
selects and reweights a subset of rows from both $A$ and $B$. When the 
rows are selected according to the
distribution of \textit{statistical leverage scores} on the design matrix $A$,
only $\tilde O\paren{R / (\varepsilon \delta)}$ samples are required (subject
to the assumptions at the end of section \ref{sec:sketched_linear_lstsq}). 
The challenge, then, is to efficiently sample according to the 
leverage scores when $A$ has Khatri-Rao structure.

We propose a leverage-score sampler for the Khatri-Rao
product of matrices with tens of millions of rows each. 
After construction, our sampler draws each row in time quadratic 
in the column count, but logarithmic in the total row count of the Khatri-Rao product. 
Our core contribution is the following theorem.

\begin{theorem}[Efficient Khatri-Rao Product Leverage Sampling]
Given $U_1, ..., U_N$ with $U_j \in \RR^{I_j \times R}$, 
there exists a data structure satisfying the following: 
\begin{enumerate}
    \item The data structure has construction time $O\paren{\sum_{j=1}^{N} 
    I_j R^2}$
    and requires additional storage space $O\paren{\sum_{j=1}^{N} I_j R}.$ If a single entry in a
    matrix $U_j$ changes, it can be updated in time $O(R \log \paren{I_j / R})$. 
    If the entire matrix $U_j$ changes, it can be updated in time $O\paren{I_j R^2}$.

    \item The data structure produces $J$ samples from the Khatri-Rao product $U_1 \odot ... \odot U_N$ according to the exact leverage score distribution on its rows in time 
    \[
    O\paren{NR^3 + J \sum_{k=1}^N R^2 \log \max \paren{I_k, R}} 
    \]  
    using $O(R^3)$ scratch space. The structure can also draw samples from the Khatri-Rao product of any subset of $U_1, ..., U_N$. 
\end{enumerate}
\label{thm:main_krp_res}
\end{theorem}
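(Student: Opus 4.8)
The plan is to turn exact leverage-score sampling on $A = U_1 \odot \cdots \odot U_N$ into a sequence of $N$ one-dimensional sampling problems — one per factor — and to accelerate each with a binary tree of precomputed Gram aggregates. Writing $u_{j,i}$ for the $i$-th row of $U_j$, two structural identities drive everything: the row of $A$ indexed by $(i_1,\dots,i_N)$ is the Hadamard product $u_{1,i_1} * \cdots * u_{N,i_N}$; the Gram matrix $G := A^\top A$ equals $(U_1^\top U_1) \circledast \cdots \circledast (U_N^\top U_N)$, the entrywise product of the factor Gram matrices (positive definite whenever each $U_j$ has full column rank, by the Schur product theorem); and hence the leverage score of that row is $\ell_{i_1,\dots,i_N} = \langle G^{-1}, (u_{1,i_1}u_{1,i_1}^\top)\circledast\cdots\circledast(u_{N,i_N}u_{N,i_N}^\top)\rangle$, with $\langle\cdot,\cdot\rangle$ the entrywise inner product. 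Summing over all index tuples and using $\sum_i u_{j,i}u_{j,i}^\top = U_j^\top U_j$ gives $\sum \ell = \operatorname{tr}(G^{-1}G) = R$, so the target distribution is $p(i_1,\dots,i_N) = \ell_{i_1,\dots,i_N}/R$.

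Next I would expand $p$ by the chain rule. Marginalizing $i_{k+1},\dots,i_N$ and conditioning on an already-drawn prefix $(i_1,\dots,i_{k-1})$ shows that $i_k$ must be drawn with probability proportional to $u_{k,i_k}^\top M_k\, u_{k,i_k}$, where $M_k = \diag(w)\bigl(G^{-1}\circledast G_{>k}\bigr)\diag(w)$, $G_{>k} := \circledast_{j>k} U_j^\top U_j$ is a suffix Hadamard product, and $w := u_{1,i_1} * \cdots * u_{k-1,i_{k-1}}$ is the running Hadamard product of the sampled rows. The decisive points are that the "history" factor $\circledast_{j<k}(u_{j,i_j}u_{j,i_j}^\top)$ collapses to the rank-one matrix $w w^\top$, so $M_k$ is obtained from the fixed matrix $G^{-1}\circledast G_{>k}$ by a two-sided diagonal scaling in $O(R^2)$ time, and $w$ is updated to $w * u_{k,i_k}$ in $O(R)$ time after each draw. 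The quantities $G^{-1}$, the suffix products $G_{>k}$, and the per-level matrices $G^{-1}\circledast G_{>k}$ are formed once in $O(NR^3)$ time; this one-time cost is also why sampling from the Khatri-Rao product of an arbitrary subset $S$ of the factors needs only a fresh $O(|S|R^3)$ preprocessing, not a rebuild, since the per-factor trees below are unchanged.

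The data structure and per-draw routine come next. For each $U_j$ I would build a balanced binary tree over its rows in which every node $v$ stores the partial Gram $G_v = \sum_{i\in\mathrm{leaves}(v)} u_{j,i}u_{j,i}^\top$; taking the leaves to hold $O(R)$ consecutive rows keeps the number of stored $R\times R$ matrices to $O(I_j/R)$, the extra space to $O(I_j R)$, and makes assembling the leaf Grams the dominant $O(I_j R^2)$ term of construction. To draw $i_k$ from the conditional above, form $M_k$ and descend the tree of $U_k$: at an internal node with children $v_L,v_R$ the "mass" $\langle M_k, G_{v_L}\rangle$ is computed in $O(R^2)$, one recurses into $v_L$ with probability $\langle M_k,G_{v_L}\rangle / \langle M_k,G_v\rangle$ (and into $v_R$ with the complement), and on reaching a leaf one finishes by sampling among its $O(R)$ rows with weights $u_{k,i}^\top M_k u_{k,i}$. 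Exactness is immediate by telescoping: since $G_v = G_{v_L}+G_{v_R}$ and $\langle M_k,\cdot\rangle$ is linear, the probability of reaching any leaf equals its mass divided by the root mass, and multiplying by the within-leaf conditional collapses to $u_{k,i}^\top M_k u_{k,i}\big/\sum_{i'} u_{k,i'}^\top M_k u_{k,i'}$, which is exactly $p(i_k=i\mid i_1,\dots,i_{k-1})$; the product of the $N$ conditionals is $p$. Summing over the $N$ factors the $O(R^2)$ to build each $M_k$, the $O(\log\max(I_k,R))$ nodes on the descent path at $O(R^2)$ each, and the leaf step yields the stated $O\paren{NR^3 + J\sum_{k} R^2\log\max(I_k,R)}$ over $J$ draws, needing only the $O(R\times R)$ working matrices, i.e. $O(R^3)$ scratch. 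For updates: changing one entry of $U_j$ perturbs a single $u_{j,i}u_{j,i}^\top$ by a matrix supported on one row and one column (so $O(R)$ nonzeros), which is propagated to the $O(\log(I_j/R))$ ancestor Grams in $O(R)$ each; replacing all of $U_j$ just rebuilds its tree in $O(I_j R^2)$.

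I expect the difficulty to lie not in any one step but in keeping all three budgets simultaneously tight: deriving the conditional distribution with its rank-one history collapse (so each $M_k$ costs $O(R^2)$, not $O(NR^2)$), designing the tree aggregates to achieve $O(\sum_j I_j R)$ space, logarithmic-time sampling, and fast single-entry updates at once, and verifying that the tree walk reproduces the conditional sampler exactly rather than approximately. The leaf-size parameter balancing stored-Gram space against the cost of the final scan is the delicate quantitative point.
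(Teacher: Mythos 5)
Your reduction to a chain of conditional draws, the form of the conditional weight $u_{k,i}^\top M_k u_{k,i}$ with the rank-one history collapse, and the segment tree of partial Gram matrices all match the paper's construction (Theorem \ref{thm:malik2022} and Lemma \ref{lemma:efficient_row_sampler}). But there is a genuine gap exactly at the point you flag as ``the delicate quantitative point'' and then do not resolve: the cost of the final leaf scan. You fix the leaf size at $F = \Theta(R)$ rows to get $O(I_j R)$ space, and at a leaf you must evaluate the quadratic form $u_{k,i}^\top M_k u_{k,i}$ for each of the $\Theta(R)$ rows it holds. Since $M_k = \diag(w)\,(G^{+}\circledast G_{>k})\,\diag(w)$ is a dense $R\times R$ matrix of generally full rank, each evaluation costs $O(R^2)$ (and batching as $\diag(W M_k W^\top)$ still costs $O(FR^2)$), so the leaf step is $O(R^3)$ per factor per draw. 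Your total sampling time is therefore $O(JNR^3)$, not the claimed $O(NR^3 + J\sum_k R^2\log\max(I_k,R))$. Shrinking the leaves to $F=1$ fixes the time but blows the space up to $O(I_j R^2)$, so no choice of leaf size alone meets both budgets; this is precisely the tension the paper points out at the start of Section \ref{sec:krp_leverage_defs}.

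The missing idea is the paper's two-stage sampler. Because $G_{>k}$ is fixed across all $J$ draws for a given factor, one eigendecomposes $G_{>k} = V\Lambda V^\top$ once (cost $O(R^3)$, amortized), writes the conditional distribution as a nonnegative mixture over the $R$ eigenvector components, first samples a component index $\hat u_k$ from an $R$-outcome distribution (cheap, since the relevant ``matrix'' has only $R$ rows), and only then samples the row of $U_k$ conditioned on that component. In the second stage the matrix $Y$ in the quadratic form is rank one, so the leaf scan reduces to a matrix--vector product costing $O(FR) = O(R^2)$, which is what makes $F=R$ compatible with the stated time bound. Without this (or an equivalent device), your argument proves a weaker sampling bound. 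A secondary, minor point: you assume $G$ is invertible via full column rank of each factor; the theorem permits rank-deficient $A$, so you should work with $G^{+}$ throughout as the paper does.
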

The efficient update property and ability to exclude one matrix are important in CP decomposition. When the inputs
$U_1, ..., U_N$ are sparse, an analogous
data structure with 
$O\paren{R \sum_{j=1}^N \textrm{nnz}(U_j)}$ 
construction time 
and $O\paren{\sum_{j=1}^N \textrm{nnz}(U_j)}$ storage space exists with identical
sampling time. Since the output factor matrices $U_1, ..., U_N$ 
are typically dense, we defer the proof to Appendix 
\appendixref{appendix:sparse_input_extension}. 
Combined with error
guarantees for leverage-score sampling, we achieve an algorithm for alternating least squares CP decomposition
with asymptotic complexity lower than recent state-of-the-art methods (see Table 
\ref{tab:cp-complexity-comparison}).

Our method provides the most practical benefit on
sparse input tensors, which may have dimension lengths in the 
tens of millions (unlike dense tensors that quickly incur intractable
storage costs at large dimension lengths) \cite{frosttdataset}. 
On the Amazon and Reddit tensors with billions of nonzero entries, 
our algorithm STS-CP can achieve 95\% of the fit of non-randomized ALS 
between 1.5x and 2.5x faster than a high-performance implementation 
of the state-of-the-art CP-ARLS-LEV 
algorithm \cite{larsen_practical_2022}. 
Our algorithm is significantly more sample-efficient; on 
the Enron tensor, only $\sim 65,000$ samples per solve were required 
to achieve the 95\% accuracy threshold above a rank of 50, which could 
not be achieved by CP-ARLS-LEV with even 54 times as many samples.

\begin{table}[htb]
\caption{Asymptotic Complexity to decompose an $N$-dimensional $I \times ... \times I$ 
dense tensor via CP alternating least squares. For randomized
algorithms, each approximate least-squares solution has residual
within $(1 + \varepsilon)$ of the optimal value with high probability
$1 - \delta$. Factors involving $\log R$ and $\log (1 / \delta)$ are hidden ($\tilde O$ notation). See \appendixref{appendix:intro} for details.} 
\label{tab:cp-complexity-comparison}
\begin{center}
\begin{small}
\begin{sc}
	\begin{tabular}{ll}  
		\toprule
		Algorithm & Complexity per Iteration 												\\
		\midrule
		CP-ALS \cite{kolda_tensor_overview}		& $N (N+I) I^{N-1} R$ 						\\ 
		CP-ARLS-LEV	\cite{larsen_practical_2022} & $N ( R + I ) R^{N} / (\varepsilon \delta)$	\\ 
		TNS-CP \cite{malik_tns_cp} & $N^3 I R^3 / (\varepsilon \delta)$ 			\\ 
		Gaussian TNE \cite{ma_cost_efficient_embedding}  & $N^2(N^{1.5} R^{3.5} / \varepsilon^3 + I R^2) / \varepsilon^2$ 			\\ 
		\textbf{STS-CP (ours)}	   			& $N (N R^3 \log I + IR^2) / (\varepsilon \delta)$ 			\\
		\bottomrule
	\end{tabular} 
\end{sc}
\end{small}
\end{center}
\end{table}

\section{Preliminaries and Related Work}
\label{sec:preliminaries}
\paragraph{Notation.}
We use $\br{N}$ to denote the set $\set{1, ..., N}$ for a
positive integer $N$.
We use $\tilde O$ notation to indicate the presence of multiplicative 
terms polylogarithmic in $R$ and $(1 / \delta)$ in runtime complexities.
For the complexities of our methods, these logarithmic factors are 
no more than $O(\log (R / \delta))$. We 
use Matlab notation $A\br{i, :},
A\br{:, i}$ to index rows, resp. columns, of matrices. For consistency, we
use the convention that $A\br{i, :}$ is a row vector. 
We use $\cdot$ for standard matrix multiplication, $\circledast$ as 
the elementwise product, $\otimes$ to
denote the Kronecker product, and $\odot$ for the Khatri-Rao product. 
See Appendix \appendixref{appendix:notation} for a definition of each operation.
Given matrices $A \in \RR^{m_1 \times n}, B \in \RR^{m_2 \times n}$,
the $j$-th column of the Khatri-Rao product 
$A \odot B \in \RR^{m_1 m_2 \times n}$ is the
Kronecker product $A\br{:, j} \otimes B\br{:, j}$.

We use angle brackets $\gen{\cdot, ..., \cdot}$ to denote a \textbf{generalized inner product}.
For identically-sized vectors / matrices,
it returns the sum of all entries in 
their elementwise product. For 
$A, B, C \in \RR^{m \times n}$, 
\[
\gen{A, B, C} := \sum_{i=1,j=1}^{m, n} A\br{i, j} B\br{i, j} C\br{i, j}.
\]
Finally, $M^+$ denotes the pseudoinverse of matrix $M$.

\subsection{Sketched Linear Least Squares}
\label{sec:sketched_linear_lstsq}
A variety of random sketching operators $S$ have been 
proposed to solve overdetermined least squares problems $\min_X \norm{AX - B}_F$ when $A$ has no special structure 
\cite{woodruff2014sketching,fjlt}. When $A$ has
Khatri-Rao product structure, prior work has focused on 
\textit{sampling} matrices \cite{cheng_spals_2016, larsen_practical_2022}, which have a single nonzero entry per row, 
operators composed of fast Fourier /
trigonometric transforms \cite{jin_faster_2020}, or Countsketch-type operators \cite{wang2015fast, ahle_treesketch}. 
For tensor decomposition, however, the matrix $B$ may be sparse or implicitly specified
as a black-box function. When $B$ is sparse, Countsketch-type operators 
still require the algorithm to iterate over all nonzero values in $B$. As \citet{larsen_practical_2022} note, operators similar to the FFT 
induce fill-in when applied to a sparse matrix $B$, destroying
the benefits of sketching.
Similar difficulties arise when $B$ is implicitly specified. This motivates our
decision to focus on row sampling operators, which only touch a subset of entries from
$B$. Let $\hat x_1, ..., \hat x_J$ be a selection of $J$ indices for the rows
of $A \in \RR^{I \times R}$, sampled i.i.d.\ according to a probability distribution
$q_1, ..., q_I$. The associated sampling matrix $S \in \RR^{J \times I}$ is 
specified by
\[
S\br{j, i}= 
\begin{cases}
    \frac{1}{\sqrt{J q_i}},& \text{if } \hat x_j = i \\
    0,              & \text{otherwise}
\end{cases}
\]
where the weight of each nonzero entry corrects bias induced by sampling. 
When the probabilities $q_j$ are proportional to the
\textit{leverage scores} of the rows of $A$, strong guarantees apply to the
solution of the downsampled problem. 

\paragraph{Leverage Score Sampling.} 
The leverage scores of a matrix assign a measure of importance to each of its
rows. The leverage score of row $i$ from matrix $A \in \RR^{I \times R}$ is given by
\begin{equation}
\ell_i = A\br{i, :} (A^\top A)^+ A \br{i, :}^\top
\label{eq:leverage_definition}
\end{equation}
for $1 \leq i \leq I$. Leverage scores can 
be expressed equivalently as the squared row norms of the 
matrix $Q$ in any reduced $QR$ factorization of $A$
\cite{drines_fast_leverage_scores}.
The sum of all leverage scores is the rank of $A$ \cite{woodruff2014sketching}. 
Dividing the scores by their sum, we induce a probability distribution on the rows used
to generate a sampling matrix $S$. The next theorem has appeared in
several works, and we take the form given by 
\citet{malik_tns_cp}. For an appropriate sample count, it guarantees that
the residual of the downsampled problem is close to the residual of the 
original problem.

\begin{theorem}[Guarantees for Leverage Score Sampling] 
Given $A \in \RR^{I \times R}$ and $\varepsilon, \delta \in (0, 1)$, 
let $S \in \RR^{J \times I}$ be a leverage score sampling matrix
for $A$. 
Further define $\tilde X = \argmin_X \norm{SAX - SB}_F$. 
If $J \gtrsim
R \max \paren{\log \paren{R / \delta}, 1 / (\varepsilon \delta)}$, then
with probability at least $1 - \delta$ it holds that
\[
\norm{A \tilde X - B}_F \leq (1 + \varepsilon) \min_X \norm{AX - B}_F.
\]
\label{thm:lev_score_lowerbounds}
\end{theorem}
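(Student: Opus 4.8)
The plan is to follow the standard two-condition structural analysis for sketched least squares, specializing the generic bounds to the leverage-score distribution $q_i = \ell_i / R$. First I would remove the conditioning of the design matrix from the picture by working with an orthonormal basis: let $U \in \RR^{I \times R}$ have orthonormal columns spanning the range of $A$ (e.g.\ the $Q$ factor of a reduced $QR$ factorization), so that by the characterization noted after \eqref{eq:leverage_definition} the leverage scores are $\ell_i = \norm{U\br{i,:}}^2$ and hence $q_i = \norm{U\br{i,:}}^2 / R$. Writing $X^\ast = \argmin_X \norm{AX - B}_F$ and $B^\perp = B - A X^\ast = (I - A A^+) B$, the residual $B^\perp$ is orthogonal to the range of $A$, so $U^\top B^\perp = 0$. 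The whole argument then reduces to controlling how $S$ distorts the two quantities $U^\top S^\top S U$ and $U^\top S^\top S B^\perp$, whose noiseless values are $I_R$ and $0$ respectively.

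Concretely, I would isolate two structural conditions on the random matrix $S$,
\begin{equation}
\sigma_{\min}\paren{SU}^2 \geq 1/2 \qquad \text{and} \qquad \norm{U^\top S^\top S B^\perp}_F^2 \leq \tfrac{\varepsilon}{2}\norm{B^\perp}_F^2,
\end{equation}
and then show by deterministic linear algebra that whenever both hold the sampled solution satisfies the claimed bound. This step writes $\tilde X$ through the normal equations of the sketched problem after the change of variables $A X = U Y$, decomposes $SB = S A X^\ast + S B^\perp$, uses the first condition to invert $(SU)^\top(SU)$ with bounded norm and the second to bound the cross term, and combines these with the Pythagorean identity $\norm{A\tilde X - B}_F^2 = \norm{A\tilde X - A X^\ast}_F^2 + \norm{B^\perp}_F^2$ to obtain $\norm{A \tilde X - B}_F^2 \leq (1 + \varepsilon)\norm{B^\perp}_F^2$; taking square roots and using $\sqrt{1 + \varepsilon} \leq 1 + \varepsilon$ gives the statement. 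This part is routine once the two conditions are in place.

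The substance is in establishing the two conditions with the stated sample counts. For the spectral condition I would write $(SU)^\top (SU) = \frac{1}{J} \sum_{t=1}^J \frac{1}{q_{\hat x_t}} U\br{\hat x_t, :}^\top U\br{\hat x_t, :}$ as a sum of i.i.d.\ rank-one matrices with mean $I_R$ and apply a matrix Chernoff / Bernstein inequality. The crucial point---and the reason leverage-score sampling is the right distribution---is that each summand has spectral norm $\norm{U\br{i,:}}^2 / q_i = R$ \emph{uniformly} in $i$, so the per-sample fluctuation is controlled with no dependence on the conditioning of $A$; the matrix tail bound then forces the eigenvalues of the average into $[1/2, 3/2]$ with probability $1 - \delta/2$ once $J \gtrsim R \log(R/\delta)$. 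For the second condition, $\mathbb{E}\br{U^\top S^\top S B^\perp} = U^\top B^\perp = 0$, so I would bound the second moment $\mathbb{E}\norm{U^\top S^\top S B^\perp}_F^2 \leq \frac{1}{J}\sum_i \frac{\norm{U\br{i,:}}^2 \norm{B^\perp\br{i,:}}^2}{q_i} = \frac{R}{J}\norm{B^\perp}_F^2$, where again the leverage weighting cancels the $\norm{U\br{i,:}}^2$ factor, and apply Markov's inequality to get the condition with probability $1 - \delta/2$ once $J \gtrsim R/(\varepsilon\delta)$. A union bound over the two failure events, with $J \gtrsim R\max(\log(R/\delta),\, 1/(\varepsilon\delta))$ covering both sample-count requirements, completes the proof.

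I expect the main obstacle to be the matrix concentration step for the spectral condition: the second-moment bound and Markov argument for approximate matrix multiplication are elementary and the deterministic reduction is bookkeeping, but the subspace-embedding guarantee requires a genuine matrix tail inequality and care that the uniform per-sample norm bound $R$ is exactly what the leverage weights $q_i = \ell_i / R$ deliver. Finally, since this statement is standard (attributed above to several prior works), I would verify that my constants line up with the cited form rather than re-deriving them from scratch.
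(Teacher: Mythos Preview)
Your proposal is correct and follows the standard route for this result (two structural conditions, matrix Chernoff for the subspace-embedding half, a second-moment plus Markov argument for the approximate-matrix-multiplication half, then the deterministic Pythagorean reduction). The paper itself does \emph{not} prove Theorem~\ref{thm:lev_score_lowerbounds}: it is quoted verbatim from prior work (the form is attributed to \citet{malik_tns_cp}, with roots in the leverage-score sampling literature), so there is no paper-specific proof to compare against. What you have written is essentially the argument one finds in those cited sources, and your identification of the key mechanism---that the leverage weights $q_i = \ell_i / R$ make the per-sample spectral norm uniformly equal to $R$, which is exactly what the matrix Chernoff bound needs---is the heart of the matter. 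One small caveat: your setup takes $U \in \RR^{I \times R}$ with orthonormal columns, implicitly assuming $A$ has full column rank; in the rank-deficient case $U$ has $\mathrm{rank}(A) < R$ columns and the bound improves accordingly, but the theorem as stated (with $R$ on the right-hand side of the sample condition) still holds since $\mathrm{rank}(A) \leq R$.
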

For the applications considered in this work, $R$ ranges up to a few hundred. 
As $\varepsilon$ and $\delta$ tend to 0 with fixed $R$, 
$1 / (\varepsilon \delta)$ dominates $\log(R / \delta)$. Hence, 
we assume that the minimum sample count $J$ to achieve the
guarantees of the theorem is $\Omega(R / (\varepsilon \delta))$. 

\subsection{Prior Work}
\paragraph{Khatri-Rao Product Leverage Score Sampling.} 
Well-known sketching algorithms exist to quickly 
estimate the leverage scores of dense matrices
\cite{drines_fast_leverage_scores}. These algorithms
are, however, intractable for 
$A = U_1 \odot ... \odot U_N$ 
due to the height of the Khatri-Rao product.
\citet{cheng_spals_2016} instead approximate each score 
as a product of leverage scores associated with each matrix $U_j$.
\citet{larsen_practical_2022} 
propose CP-ARLS-LEV, which uses a similar approximation 
and combines random sampling with a deterministic selection of high-probability indices. Both methods were presented in the context
of CP decomposition. To sample from the
Khatri-Rao product of $N$ matrices,
both require $\tilde O(R^N / (\varepsilon \delta))$ samples 
to achieve the $(\varepsilon, \delta)$ guarantee on the residual of each
least squares solution. These methods are simple to implement and perform well
when the Khatri-Rao product has column count up to 20-30. On the other hand,
they suffer from high sample complexity as $R$ and $N$ increase. The TNS-CP algorithm by 
\citet{malik_tns_cp} samples from the exact leverage score distribution, thus requiring only 
$\tilde O(R / (\varepsilon \delta))$ samples per 
least squares solve. Unfortunately, it requires time $O\paren{\sum_{j=1}^N I_j R^2}$ 
to draw each sample. 

\paragraph{Comparison to Woodruff and Zandieh.}
The most comparable results to ours appear in work by
\citet{woodruff_zandieh}, 
who detail an algorithm for approximate ridge leverage-score sampling 
for the Khatri-Rao product in near input-sparsity time.
Their work relies on a prior oblivious method by
\citet{ahle_treesketch}, which sketches a Khatri-Rao 
product using a sequence of 
Countsketch / OSNAP operators arranged in a tree. 
Used in isolation to solve a linear least squares problem, the 
tree sketch construction time 
scales as $O \paren{\frac{1}{\varepsilon}
\sum_{j=1}^N \textrm{nnz}(U_j)}$ and requires an embedding 
dimension quadratic in $R$ to achieve the $(\varepsilon, \delta)$ 
solution-quality guarantee. Woodruff and Zandieh use a collection of
these tree sketches, each with carefully-controlled approximation error,
to design an algorithm with 
linear runtime dependence on the column count $R$.
On the other hand,
the method exhibits $O(N^7)$ scaling in the number of matrices involved, has $O(\varepsilon^{-4})$ scaling 
in terms of the desired accuracy, and relies on a sufficiently high ridge regularization parameter. 
Our data structure instead requires construction time quadratic in $R$. 
In exchange, we use distinct methods to design an efficiently-updatable sampler with 
runtime linear in both $N$ and $\varepsilon^{-1}$. These properties are attractive
when the column count $R$ is below several thousand and when error as low 
as $\epsilon \approx 10^{-3}$ is needed in the context of an iterative solver 
(see Figure \ref{fig:exact_solve_comparison_sparse}). 
Moreover, the term $O(R^2 \sum_{j=1}^N I_j)$ in our construction complexity 
arises from symmetric rank-$k$ updates, a highly-optimized BLAS3 kernel on modern CPU and GPU architectures. Appendix 
\appendixref{appendix:wz_comparison} provides a more detailed comparison between the two approaches.

\paragraph{Kronecker Regression.} Kronecker regression is a distinct (but closely related) problem to the
one we consider. There, $A = U_1 \otimes ... \otimes U_N$ and 
the matrices $U_i$ have
potentially distinct column counts $R_1, ..., R_N$. While 
the product distribution of leverage scores from $U_1, ..., U_N$
provides only an
approximation to the leverage score distribution of the 
Khatri-Rao product
\cite{cheng_spals_2016, larsen_practical_2022}, it provides the 
\textit{exact} leverage distribution for the Kronecker product.
Multiple works \cite{woodruff_optimal_kronecker, subquadratic_kronecker_regression} combine this property with other
techniques, such as dynamically-updatable 
tree-sketches \cite{dynamic_kron_regression}, to produce accurate
and updatable Kronecker sketching methods. None of these results
apply directly in our case due to the distinct properties of Kronecker and Khatri-Rao products. 

\section{An Efficient Khatri-Rao Leverage Sampler}
\label{sec:efficient_krp_sampling}
Without loss of generality, we will prove part 2 of Theorem
\ref{thm:main_krp_res} for the case where $A = U_1 \odot ... \odot U_N$; the
case that excludes a single matrix follows by reindexing matrices
$U_k$. We further assume that $A$ is a nonzero matrix, though it may be
rank-deficient. Similar to prior sampling works 
\cite{malik_efficient_2022, woodruff_zandieh}, our 
algorithm will draw one sample from
the Khatri-Rao product by sampling a row from 
each of $U_1, U_2, ....$ in sequence and computing their
Hadamard product, with the draw from 
$U_j$ conditioned on prior draws from $U_1, ..., U_{j-1}$. 

Let us index each row of $A$ by a tuple 
$(i_1, ..., i_{N}) \in \br{I_1} \times ... \times \br{I_{N}}$.
Equation \eqref{eq:leverage_definition} gives
\begin{equation}
\ell_{i_1, ..., i_{N}} =A \br{(i_1, ..., i_{N}), :} (A^\top A)^+ A \br{(i_1, ..., i_{N}), :}^\top.
\label{eq:krp_leverage}
\end{equation}
For $1 \leq k \leq N$, define $G_k := U_k^\top U_k \in \RR^{R \times R}$ and 
$G := \paren{\startimes_{k=1}^{N} G_k} \in \RR^{R \times R}$; it is a well-known fact that
$G = A^\top A$ \cite{kolda_tensor_overview}. For a single row sample from $A$, 
let $\hat s_1, ..., \hat s_{N}$ be random variables for 
the draws from multi-index set $\br{I_1} \times ... \times \br{I_N}$ 
according to the leverage score distribution. Assume, for some $k$, that we have 
already sampled an index 
from each of $\br{I_1}, ..., \br{I_{k-1}}$, and that 
the first $k-1$ random variables take values 
$\hat s_1 = s_1, ..., \hat s_{k-1} = s_{k-1}$. We
abbreviate the latter condition
as $\hat s_{< k} = s_{< k}$. 
To sample from $I_k$, we seek the distribution of 
$\hat s_k$ conditioned on $\hat s_1, ... \hat s_{k-1}$. 
Define $h_{< k}$ as the transposed 
elementwise product\footnote{For 
$a > b$, assume that $\startimes_{i=a}^b \paren{...}$ produces 
a vector / matrix filled with ones.} of rows 
already sampled:
\begin{equation}
h_{< k} := \startimes_{i=1}^{k-1} U_i \br{s_i, :}^\top.
\label{eq:h_leq_k_defn}
\end{equation}
Also define $G_{>k}$ as
\begin{equation}
G_{> k} := G^+ \circledast \startimes_{i=k+1}^N G_i.
\label{eq:G_geq_k_defn}
\end{equation}
Then the following theorem provides the conditional distribution of $\hat s_k$.
\begin{theorem}[Malik 2022, \cite{malik_efficient_2022}, Adapted]
For any $s_k \in \br{I_k}$,
\begin{equation}
\begin{aligned}
p(\hat s_k = s_k\ \vert\ \hat s_{< k} = s_{< k}) 
&= C^{-1} \gen{h_{<k} h_{<k}^\top, U_k\br{s_k, :}^\top U_k \br{s_k, :}, G_{> k}} \\
&:= q_{h_{<k} , U_k ,G_{>k}}\br{s_k}
\label{eq:malik_equation}
\end{aligned}
\end{equation}
where $C = \gen{h_{<k} h_{<k}^\top, U_k^\top U_k, G_{>k}}$ is nonzero.
\label{thm:malik2022}
\end{theorem}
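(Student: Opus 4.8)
The plan is to derive the conditional probability by starting from the joint leverage-score distribution on full multi-indices and marginalizing/conditioning in the standard way, using the Khatri-Rao structure $A\br{(i_1,\dots,i_N),:} = U_1\br{i_1,:}\circledast\cdots\circledast U_N\br{i_N,:}$ together with the factorization $A^\top A = G = \startimes_{k=1}^N G_k$. First I would write $\ell_{i_1,\dots,i_N}$ from \eqref{eq:krp_leverage} as a quadratic form in $G^+$ and re-express it using the generalized inner product: since the row of $A$ is an elementwise product of the $U_j\br{i_j,:}$, the scalar $A\br{(i_1,\dots,i_N),:}\,G^+\,A\br{(i_1,\dots,i_N),:}^\top$ equals $\gen{\,v v^\top,\ G^+\,}$ where $v = \startimes_{j=1}^N U_j\br{i_j,:}^\top$, and $vv^\top = \startimes_{j=1}^N U_j\br{i_j,:}^\top U_j\br{i_j,:}$. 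The conditional probability $p(\hat s_k = s_k \mid \hat s_{<k} = s_{<k})$ is, by definition of conditioning on an i.i.d.\ draw from the normalized-leverage distribution, the sum of $\ell$ over all completions of the multi-index with coordinates $k$ fixed to $s_k$ and coordinates $1,\dots,k-1$ fixed to $s_{<k}$, divided by the same sum with only coordinates $1,\dots,k-1$ fixed.

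The key algebraic step is then to push the summation over the free coordinates $s_{k+1},\dots,s_N$ inside the generalized inner product. Because the $\startimes$ factors separate across the index coordinates and each $G_i = U_i^\top U_i = \sum_{s_i} U_i\br{s_i,:}^\top U_i\br{s_i,:}$, summing coordinate $i>k$ over $\br{I_i}$ turns the $i$-th factor $U_i\br{s_i,:}^\top U_i\br{s_i,:}$ into $G_i$. Carrying this out for all $i>k$ collapses the tail into $\startimes_{i=k+1}^N G_i$, and absorbing $G^+$ yields exactly $G_{>k}$ as defined in \eqref{eq:G_geq_k_defn}. The fixed head coordinates $1,\dots,k-1$ contribute $\startimes_{i=1}^{k-1} U_i\br{s_i,:}^\top U_i\br{s_i,:} = h_{<k}h_{<k}^\top$ by \eqref{eq:h_leq_k_defn}, and coordinate $k$ contributes $U_k\br{s_k,:}^\top U_k\br{s_k,:}$. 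Thus the numerator of the conditional equals $\gen{h_{<k}h_{<k}^\top,\ U_k\br{s_k,:}^\top U_k\br{s_k,:},\ G_{>k}}$. Applying the same tail-summation argument to coordinate $k$ as well shows the denominator (the marginal over completions of $s_{<k}$) equals $C = \gen{h_{<k}h_{<k}^\top,\ G_k,\ G_{>k}} = \gen{h_{<k}h_{<k}^\top,\ U_k^\top U_k,\ G_{>k}}$, giving the normalization constant in the statement.

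Finally I would verify $C \neq 0$: $C$ is the conditional-marginal probability mass (up to the global normalization by $\operatorname{rank}(A)$) of the event $\hat s_{<k} = s_{<k}$, and since $s_{<k}$ is only ever produced by the sampler as the value of a random draw that occurred with positive probability under the recursion, this mass is strictly positive; equivalently, one checks directly that $C = h_{<k}^\top G_{>k}' h_{<k}$ for the appropriate PSD matrix built from $G_{>k}$ and $G_k$, which is nonnegative and vanishes only if $h_{<k}$ lies in a degenerate subspace excluded by the assumption that $A$ is nonzero and that $s_{<k}$ had positive conditional probability.

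The main obstacle is making the interchange of summation and the trilinear generalized inner product fully rigorous — in particular tracking that the elementwise ($\startimes$) structure genuinely decouples the coordinate sums and that the pseudoinverse $G^+$ (present because $A$ may be rank-deficient) behaves correctly under this manipulation; once the bookkeeping of which factor corresponds to which coordinate is set up carefully, the rest is routine. I would also note that this is essentially a restatement of Malik 2022 \cite{malik_efficient_2022} specialized to the KRP/CP setting, so an alternative is to cite that result and only spell out the adaptation of notation, but giving the self-contained marginalization argument above is cleaner.
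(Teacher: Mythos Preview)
Your proposal is correct and follows essentially the same route as the paper's proof: express $\ell_{i_1,\dots,i_N}$ via the elementwise-product structure of the Khatri-Rao row, marginalize over the tail coordinates $i_{k+1},\dots,i_N$ by collapsing each factor $U_i\br{i_i,:}^\top U_i\br{i_i,:}$ into $G_i$, and then sum over $s_k$ to obtain $C$. The paper's argument that $C\neq 0$ is slightly more explicit than yours---it treats the base case $k=1$ (where $C=\operatorname{rank}(A)>0$) and then shows inductively that $C=\tilde C\cdot p(\hat s_{k-1}=s_{k-1}\mid\hat s_{<k-1}=s_{<k-1})$ for the previous normalizer $\tilde C$---but this is exactly the idea you sketch; also, your worry about interchanging summation with the generalized inner product is unfounded since all sums are finite and the inner product is multilinear.
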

We include the derivation of 
Theorem \ref{thm:malik2022} from Equation 
\eqref{eq:krp_leverage} in Appendix \appendixref{appendix:malik_reproof}. 
Computing all entries of the probability vector $q_{h_{<k} , U_k ,G_{>k}}$
would cost $O(I_j R^2)$ per sample, too costly when $U_j$ has
millions of rows. It is likewise 
intractable (in preprocessing time and space complexity) to 
precompute probabilities for every possible conditional distribution
on the rows of $U_j$, since the conditioning random variable has 
$\prod_{k=1}^{j-1} I_k$ potential values. Our key innovation is a
data structure to sample from a discrete distribution of
the form $q_{h_{<k} , U_k ,G_{>k}}$ \textit{without} materializing all of its
entries or incurring superlinear cost in either $N$ or $\varepsilon^{-1}$. 
We introduce this data structure in the next section and will apply it 
twice in succession to get the complexity in Theorem
\ref{thm:main_krp_res}.

\subsection{Efficient Sampling from $q_{h, U, Y}$}
\label{sec:row_sampling_problem}
We introduce a slight change of notation in this section to simplify the problem 
and generalize our sampling lemma. Let $h \in \RR^{R}$ be a vector
and let $Y \in \RR^{R \times R}$ be a positive semidefinite (p.s.d.) matrix, respectively. 
Our task is to sample $J$ rows from a matrix 
$U \in \RR^{I \times R}$ according to the distribution 
\begin{equation}
q_{h, U, Y}\br{s}
:= 
C^{-1} \gen{h h^\top, U^\top \br{s, :} U\br{s, :}, Y}
\end{equation}
provided the normalizing constant $C = \gen{h h^\top, U^\top U, Y}$, 
is nonzero. We impose that all $J$ rows are drawn 
with the same matrices $Y$ and $U$, 
but potentially distinct vectors $h$. The following lemma establishes that
an efficient sampler for this problem exists.
\begin{lemma}[Efficient Row Sampler]
Given matrices $U \in \RR^{I \times R}, Y \in \RR^{R \times R}$ with $Y$ p.s.d., there exists a data
structure parameterized by positive integer $F$ 
that satisfies the following: 
\begin{enumerate}
    \item The structure has construction time 
    $O\paren{IR^2}$ and storage requirement $O\paren{R^2 
    \ceil{I/F}}$. If $I < F$, the storage requirement drops to $O(1)$.

    \item After construction, the data structure can produce a sample according to the
    distribution $q_{h, U, Y}$ in time $O(R^2 \log \ceil{I / F} + F R^2)$ for any vector  
    $h$.

    \item If $Y$ is a rank-1 matrix, 
    the time per sample drops to
    $O(R^2 \log \ceil{I / F} + F R)$.
\end{enumerate}
\label{lemma:efficient_row_sampler}
\end{lemma}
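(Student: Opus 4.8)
The plan is to turn the sampling problem into a weighted root-to-leaf walk on a segment tree built over the rows of $U$. First I would unfold the generalized inner product: for a single row $s$,
\[
\gen{h h^\top,\, U\br{s,:}^\top U\br{s,:},\, Y} = \paren{h \circledast U\br{s,:}^\top}^\top Y \paren{h \circledast U\br{s,:}^\top} \ge 0,
\]
the inequality because $Y$ is p.s.d. Summing over any subset $\scr{S} \subseteq \br{I}$ of rows and swapping the order of summation, the unnormalized mass $\sum_{s \in \scr{S}} \gen{h h^\top, U\br{s,:}^\top U\br{s,:}, Y}$ equals $\gen{h h^\top, M_{\scr{S}}, Y}$ with $M_{\scr{S}} := \sum_{s \in \scr{S}} U\br{s,:}^\top U\br{s,:} \in \RR^{R \times R}$. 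The two facts I want to extract are that (i) the mass of a block of rows depends on the block only through the single $R \times R$ matrix $M_{\scr{S}}$, and (ii) masses of disjoint blocks add.

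Next I would build the structure parameterized by $F$: partition the $I$ rows into $L := \ceil{I/F}$ contiguous groups of size at most $F$, form a balanced binary tree with these groups as leaves, and at every node $v$ store the matrix $M_v$ obtained by summing $U\br{s,:}^\top U\br{s,:}$ over the rows beneath $v$. The leaf matrices together cost $O(I R^2)$ (they are $L$ symmetric rank-$F$ products $U\br{\scr{G}_\ell,:}^\top U\br{\scr{G}_\ell,:}$, and $LF = O(I)$), and each internal node is the sum of its two children, $O(R^2)$ apiece and $O(L)$ of them; construction is therefore $O(I R^2)$ and storage $O(R^2 \ceil{I/F})$. When $I < F$ the tree is a single leaf, so we keep nothing beyond $U$ and $Y$ and the overhead is $O(1)$. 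This settles part 1.

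For part 2, given $h$ I would precompute $\Phi := \paren{h h^\top} \circledast Y$ in $O(R^2)$ time, so that $\gen{h h^\top, M, Y}$ becomes the Frobenius inner product $\gen{\Phi, M}$ for any $M$; in particular the mass of the subtree at $v$ is $\gen{\Phi, M_v}$ and the root's mass is $C \ne 0$. Descending from the root, at each internal node I compute one child's mass $\gen{\Phi, M_{v_L}}$, obtain the sibling's by subtracting from the current (already known) node mass, and branch with the corresponding probabilities; since all subtree masses are nonnegative and the two children of a positive-mass node sum to it, the walk only ever enters positive-mass nodes and never divides by zero. This is $O(R^2)$ per level, hence $O(R^2 \log \ceil{I/F})$ to reach a leaf of at most $F$ rows, where I compute each row mass $U\br{s,:}\, \Phi\, U\br{s,:}^\top$ in $O(R^2)$ and sample proportionally, $O(F R^2)$ more. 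Telescoping the branch probabilities, the returned row has probability exactly $\gen{\Phi, U\br{s,:}^\top U\br{s,:}}/C = q_{h,U,Y}\br{s}$. For part 3, when $Y = y y^\top$ I would keep $\Phi = w w^\top$ with $w := h \circledast y$: the descent is unchanged ($M_v$ is a general matrix, so $\gen{\Phi, M_v} = w^\top M_v w$ still costs $O(R^2)$), but a leaf row's mass is now $\paren{U\br{s,:}\, w}^2$, a single length-$R$ dot product, dropping the leaf scan to $O(F R)$.

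I do not expect a sharp technical obstacle — the content is in the bookkeeping — but two points need care. The first is well-definedness of the walk: this is exactly where the hypotheses $C \ne 0$ and $Y$ p.s.d.\ enter, guaranteeing that we recurse only into subtrees of positive mass. The second is the $\Phi$-precomputation, which is what keeps the cost of each visited node at $O(R^2)$ rather than the $O(R^3)$ one would pay by handling $Y$ through, say, its eigendecomposition. I would also note that Theorem \ref{thm:main_krp_res} invokes this lemma with $F$ chosen to balance the $O(R^2 \ceil{I/F})$ storage against the $O(F R^2)$ (or $O(F R)$) per-leaf cost.
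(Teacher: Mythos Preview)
Your proposal is correct and follows essentially the same approach as the paper: a segment tree over contiguous blocks of rows storing partial Gram matrices $M_v = \sum_{s \in S(v)} U\br{s,:}^\top U\br{s,:}$, with the block mass recovered as $\gen{hh^\top, M_v, Y}$ in $O(R^2)$ and the rank-1 leaf shortcut $\paren{U\br{s,:}\,w}^2$. The only cosmetic difference is that the paper implements the descent via inversion sampling (one uniform draw, binary search on cumulative masses) rather than your per-node probabilistic branching, but the two are equivalent and yield identical complexity bounds.
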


\begin{wrapfigure}{R}{0.5\textwidth}
    \begin{minipage}{0.5\textwidth}

\scalebox{0.69}{
\begin{tikzpicture}[level/.style={sibling distance=50mm/#1},
    start chain=going right,node distance=0mm, minimum width=3.52em
    ]
    \node [circle, draw] (r){$\br{1..8}$}
        child {node [circle, draw] (v1){$\br{1..4}$}
            child {node [circle, draw] (v3){$\br{1,2}$}} 
            child {node [circle, draw] (v4){$\br{3,4}$}}
        } 
        child {node [circle, draw] (v2){$\br{5..8}$} 
            child {node [circle, draw] (v5){$\br{5,6}$}} 
            child {node [circle, draw] (v6){$\br{7,8}$}} 
        }
        ;

    \tikzset{
      token/.style={
        rectangle split,
        rectangle split parts=2,
        rectangle split part fill={cyan!25,yellow!50},
        rectangle split ignore empty parts,
        draw,
      },
    };
  \node[on chain,token] at (-4.38, -4.0) {$q_1$}; 
  \node[on chain,token] {$q_2$}; 
  \node[on chain,token] {$q_3$}; 
  \node[on chain,token] {$q_4$}; 
  \node[on chain,token] {$q_5$};
  \node[on chain,token] {$q_6$}; 
  \node[on chain,token] {$q_7$}; 
  \node[on chain,token] {$q_8$};
        
\end{tikzpicture}
}
    \captionof{figure}{A segment tree $T_{8,2}$ and probability distribution $\set{q_1, ..., q_8}$ on 
    $\br{1, ..., 8}$.}
    \label{fig:segment_tree}    
    \end{minipage}
\end{wrapfigure}
This data structure relies on an adaptation of a 
classic binary-tree inversion sampling technique \cite{saad_optimal_2020}. Consider a 
partition of the interval $\br{0, 1}$ into $I$ bins, the $i$-th having width 
$q_{h, U, Y}\br{i}$. We sample $d \sim \textrm{Uniform}\br{0, 1}$ and return the index of the containing bin. We locate the bin index 
through a binary search terminated when at most $F$ bins remain in 
the search space, which are then scanned in linear time. Here, $F$ is a tuning parameter 
that we will use to control sampling complexity and space usage.

We can regard the binary search as a walk down a full, complete binary tree $T_{I, F}$ with
$\ceil{I / F}$ leaves, the nodes of which store contiguous, disjoint segments 
$S(v) = \set{S_0(v)..S_1(v)} \subseteq \br{I}$ of size at most $F$. The segment of each internal
node is the union of segments held by its children, and the root node
holds $\set{1,...,I}$. Suppose that the binary search
reaches node $v$ with left child $L(v)$ and maintains the interval 
$\br{\textrm{low}, \textrm{high}} \subseteq \br{0, 1}$ as the remaining 
search space to explore. 
Then the search branches left in the tree iff
$d < \textrm{low} + \sum_{i \in S(L(v))} q_{h, U, Y} \br{i}.$ 

This branching condition can be evaluated efficiently if appropriate information
is stored at each node of the segment tree. Excluding the offset ``low'', 
the branching threshold takes the form
\begin{equation}
\sum_{i \in S(v)} q_{h, U, Y} \br{i} 
= C^{-1} \gen{h h^\top, \sum_{i \in S(v)} U\br{i, :}^\top U\br{i, :}, Y} 
:=C^{-1} \gen{h h^\top, G^v, Y}.
\label{eq:partial_gram}
\end{equation}
Here, we call each matrix $G^v \in \RR^{R \times R}$ a \textit{partial Gram matrix}. In time $O(I R^2)$ and space $O(R^2 \ceil{I / F})$,
we can compute and cache $G^v$ for each node of the tree to construct our
data structure. Each subsequent binary search costs $O(R^2)$ time 
to evaluate Equation \eqref{eq:partial_gram} at each of
$\log \ceil{I / F}$ internal nodes and $O(FR^2)$ to
evaluate $q_{h, U, Y}$ at the $F$ indices held by each leaf, giving point
2 of the lemma. This cost at each leaf node reduces to $O(FR)$ in case 
$Y$ is rank-1, giving point 3. A complete proof of this lemma appears in 
Appendix \appendixref{appendix:efficient_lemma_proof}.

\subsection{Sampling from the Khatri-Rao Product}
\label{sec:krp_leverage_defs}
We face difficulties if we directly apply 
Lemma \ref{lemma:efficient_row_sampler} to sample
from the conditional distribution in Theorem \ref{thm:malik2022}. 
Because $G_{>k}$ is not rank-1 in general, we must use point 2 of the lemma, where 
no selection of the parameter $F$ allows us to simultaneously satisfy the space and runtime constraints of Theorem \ref{thm:main_krp_res}. Selecting $F = R$ results in cost 
$O(R^3)$ per 
sample (violating the runtime requirement in point 2), 
whereas $F=1$ 
results in a superlinear storage requirement $O(IR^2)$ (violating 
the space requirement in point 1, and becoming prohibitively
expensive for $I \geq 10^6$). To avoid these extremes, 
we break the 
sampling procedure into two stages. The
first stage selects a 1-dimensional subspace spanned by an eigenvector of $G_{>k}$, 
while the second samples according to Theorem \ref{thm:malik2022} after projecting
the relevant vectors onto the selected subspace. 
Lemma \ref{lemma:efficient_row_sampler} can be used for \textit{both} stages, 
and the second stage benefits from point 3 to achieve 
better time and space complexity. 

Below, we abbreviate $q = q_{h_{<k}, U_k, G_{>k}}$ and $h = h_{<k}$. 
When sampling from $I_k$, observe that 
$G_{>k}$ is the same for all samples. We compute a symmetric eigendecomposition $G_{>k} = V \Lambda V^\top$, where each column of $V$ is an eigenvector of $G_{>k}$ and $\Lambda = \diag((\lambda_u)_{u=1}^R)$ contains the eigenvalues along the diagonal. 
This allows us to rewrite entries of $q$ as 
\begin{equation}
q\br{s} = C^{-1} \sum_{u=1}^R \lambda_u \gen{h h^\top, U_k\br{s, :}^\top U_k \br{s, :}, V\br{:, u} V\br{:, u}^\top}. 
\end{equation}
Define matrix $W \in \RR^{I_k \times R}$ elementwise by 
\[
W\br{t, u} := \gen{h h^\top, U_k\br{t, :}^\top U_k \br{t, :}, V\br{:, u} 
V\br{:, u}^\top} \\
\]
and observe that all of its entries are nonnegative. 
Since $\lambda_u \geq 0$ for all $u$ ($G_{>k}$ is
p.s.d.), we can write $q$ as a mixture of probability 
distributions given by the normalized columns of $W$:
\[
q= \sum_{u=1}^R w\br{u} \frac{W\br{:, u}}{\norm{W\br{:, u}}_1},
\]
where the vector $w$ of nonnegative weights is given by $w\br{u} = (C^{-1} \lambda_u \norm{W \br{:, u}}_1)$. Rewriting $q$ in this form gives us the two stage sampling procedure: first sample a 
component $u$ of the mixture according to the weight vector $w$, then sample an 
index in $\set{1..I_k}$ according to the probability vector 
defined by $W\br{:, u} / \norm{W\br{:, u}}_1$. Let $\hat u_k$ be a random variable distributed according to the 
probability mass vector $w$. We have, for $C$ taken from Theorem \ref{thm:malik2022}, 
\begin{equation}
\begin{aligned}
p(\hat u_k = u_k) &=  C^{-1} \lambda_{u_k}
\sum_{t=1}^{I_k} W\br{t, u_k} \\
&= C^{-1} \lambda_{u_k} \gen{h h^\top, V\br{:, u_k} V\br{:, u_k}^\top, G_k} \\
&= q_{h, \sqrt{\Lambda} V^\top, G_k} \br{u_k}.
\label{eq:first_dist}
\end{aligned}
\end{equation}
Hence, we can use point 2 of Lemma 
\ref{lemma:efficient_row_sampler} to sample a value for 
$\hat u_k$ efficiently. Because $\sqrt{\Lambda} V^\top$ has 
only $R$ rows with $R \sim 10^2$, we can choose tuning parameter $F = 1$
to achieve lower time per sample while incurring a modest $O(R^3)$ space
overhead. Now, introduce a random variable 
$\hat t_k$ with distribution conditioned on $\hat u_k = u_k$ 
given by
\begin{equation}
p(\hat t_k = t_k\ \vert\ \hat u_k = u_k) := W\br{t_k, u_k} / \norm{W \br{:, u_k}}_1.
\end{equation}
This distribution is well-defined, since we suppose that $\hat u_k = u_k$ occurs with
nonzero probability $e \br{u_k}$, which implies that $\norm{W \br{:, u_k}}_1 \neq 0$. 
Our remaining task is to efficiently sample from the distribution above.
Below, we abbreviate $\tilde h = V \br{:, u_k} \circledast h$ and
derive
\begin{equation}
\begin{aligned}
p(\hat t_k = t_k\ \vert\ \hat u_k = u_k) &= \frac{\gen{h h^\top, U_k \br{t_k, :}^\top U_k \br{t_k, :}, V\br{:, u_k} V\br{:, u_k}^\top}}{\norm{W\br{:, u_k}}_1} \\
&= \frac{\gen{\tilde h \tilde h^\top, U_k \br{t_k, :}^\top U_k \br{t_k, :}, \br{1}}}{\norm{W\br{:, u_k}}_1} \\
&= q_{\tilde h, U_k, \br{1}}\br{t_k}.
\label{eq:second_conditional_dist}
\end{aligned}
\end{equation}
Based on the last line of Equation \eqref{eq:second_conditional_dist}, we 
apply Lemma \ref{lemma:efficient_row_sampler} again to build an 
efficient data structure to sample a row of 
$U_k$. Since $Y = \br{1}$ is a rank-1 matrix, 
we can use point 3 of the lemma and 
select a larger parameter value $F = R$ to reduce
space usage. The sampling time for this stage
becomes $O(R^2 \log \ceil{I_j / R}).$ 

To summarize, Algorithms \ref{alg:krp_sampler_construction}
and \ref{alg:krp_sampler_sampling} give the construction and sampling
procedures for our data structure. They rely on the
``BuildSampler" and ``RowSample" procedures from Algorithms
\ref{alg:row_sampler_construction} and 
\ref{alg:row_sampler_sampling} in Appendix 
\appendixref{appendix:efficient_lemma_proof}, which relate 
to the data structure in Lemma \ref{lemma:efficient_row_sampler}.
In the construction phase, we build
$N$ data structures from Lemma \ref{lemma:efficient_row_sampler}
for the distribution in Equation \eqref{eq:second_conditional_dist}.
Construction costs $O\paren{\sum_{j=1}^N I_j R^2}$, and if 
any matrix $U_j$ changes, we can rebuild $Z_j$ in isolation. Because $F = R$, the space required for $Z_j$ is 
$O\paren{I_j R}$. In the sampling phase, the procedure in Algorithm 
\ref{alg:krp_sampler_sampling} accepts an 
optional index $j$ of a matrix to exclude from the Khatri-Rao product.
The procedure begins by computing the symmetric eigendecomposition of
each matrix $G_{>k}$. The eigendecomposition is computed only 
once per binary tree structure, and its computation cost 
is amortized over all $J$ samples. It then creates 
data structures $E_k$ for each of the distributions specified by
Equation \eqref{eq:first_dist}. These data structures 
(along with those
from the construction phase) are used to draw $\hat u_k$ and
$\hat t_k$ in succession. The random variables $\hat t_k$ 
follow the distribution in Theorem \ref{thm:malik2022} conditioned on 
prior draws, so the multi-index $(\hat t_k)_{k \neq j}$ 
follows the leverage score distribution on $A$, as desired. 
Appendix \appendixref{appendix:cohesive_krp_proof} 
proves the complexity claims in the theorem and provides further details about the algorithms.

\begin{wrapfigure}{R}{0.5\textwidth}
    \begin{minipage}{0.5\textwidth}
\begin{algorithm}[H]
    \caption{ConstructKRPSampler($U_1, ..., U_N$)}
    \begin{algorithmic}[1]
    \FOR{$j =1..N$}
        \STATE $Z_j := \textrm{BuildSampler}(U_j, F=R, \br{1})$
        \STATE $G_j := U_j^\top U_j$
    \ENDFOR
    \end{algorithmic}
    \label{alg:krp_sampler_construction}
\end{algorithm}
\vspace{-2em}
\begin{algorithm}[H]
    \caption{KRPSample($j$, $J$)}
    \begin{algorithmic}[1]
    \STATE $G := \startimes_{k \neq j} G_k$ 
    \FOR{$k \neq j$}
        \STATE $G_{>k} := G^+ \circledast \startimes_{k=j+1}^N G_k$
        \STATE Decompose $G_{>k} = V_k \Lambda_k V_k^\top$
        \STATE $E_k := \textrm{BuildSampler}(\sqrt{\Lambda_k} \cdot V_k^\top, F=1, G_k)$ 
    \ENDFOR
    \FOR{$d =1..J$}
        \STATE $h = \br{1, ..., 1}^\top$
        \FOR{$k \neq j$}
            \STATE $\hat u_k := \textrm{RowSample}(E_k, h)$ 
            \STATE $\hat t_k := \textrm{RowSample}(Z_k, h \circledast (V_k \br{:, \hat u_k}))$
            \STATE $h \timeseq U_k \br{\hat t_k, :}$
        \ENDFOR
        \STATE $s_d = (\hat t_k)_{k \neq j}$
    \ENDFOR
    \STATE \textbf{return} $s_1, ..., s_J$
    \end{algorithmic}
    \label{alg:krp_sampler_sampling}
\end{algorithm}     
    \end{minipage}
  \end{wrapfigure}  

\subsection{Application to Tensor Decomposition}
A tensor is a multidimensional array, and the CP decomposition represents a tensor as 
a sum of outer products 
\cite{kolda_tensor_overview}. See Appendix
\appendixref{appendix:als_cp_decomp}
for an overview. To approximately decompose 
tensor $\scr T \in 
\RR^{I_1 \times ... \times I_N}$, 
the popular alternating
least squares (ALS) algorithm begins with 
randomly initialized factor
matrices $U_j$, $U_j \in \RR^{I_j \times R}$
for $1 \leq j \leq N$. We call the column count
$R$ the \textbf{rank} of the decomposition. 
Each round of ALS solves $N$ overdetermined least squares problems in sequence, each optimizing
a single factor matrix while holding the others constant. The $j$-th least squares
problem occurs in the update 
\[
U_j := \argmin_{X} \norm{U_{\neq j} \cdot X^\top - \textrm{mat}(\scr T, j)^\top}_F
\]
where $U_{\neq j} = U_N \odot 
... \odot U_{j+1} \odot U_{j-1} \odot ... \odot U_1$ 
is the Khatri-Rao product 
of all matrices excluding $U_j$ and
$\textrm{mat}(\cdot)$ denotes the mode-$j$ matricization of tensor $\scr T$. Here,  
we reverse the order of matrices in the
Khatri-Rao product to match the ordering of 
rows in the matricized tensor (see 
Appendix \ref{appendix:als_cp_decomp} for
an explicit formula for the matricization).
These problems are ideal candidates for randomized
sketching \cite{battaglino_practical, 
jin_faster_2020, larsen_practical_2022}, and applying 
the data structure in Theorem
\ref{thm:main_krp_res} gives us the \textbf{STS-CP} algorithm.
\begin{corollary}[STS-CP]
Suppose $\scr T$ is dense, and suppose we solve each least squares problem in ALS with a randomized sketching algorithm. A leverage score sampling approach as defined 
in section \ref{sec:preliminaries} guarantees that 
with $\tilde O(R / (\varepsilon \delta))$ samples per solve, the
residual of each sketched least squares problem is within 
$(1 + \varepsilon)$ of the optimal residual with probability
$(1 - \delta)$. The efficient sampler from Theorem 
\ref{thm:main_krp_res} brings the complexity of ALS to 
\[
\tilde O \paren{\frac{\noiter}{\varepsilon \delta} 
\cdot \sum_{j=1}^N \paren{N R^3 \log I_j + I_j R^2 }}
\]
where ``$\noiter$" is the number of ALS iterations, and with any term 
$\log I_j$ replaced by $\log R$ if $I_j < R$.
\label{cor:downsampled_cpals_complexity}
\end{corollary}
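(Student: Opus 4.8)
The plan is to split the statement into its two assertions: the $(1+\varepsilon)$-accuracy of each sketched solve, and the per-iteration runtime. The accuracy claim is essentially a restatement of earlier results. In the $j$-th subproblem the design matrix is $A = U_{\neq j} = U_N\odot\cdots\odot U_{j+1}\odot U_{j-1}\odot\cdots\odot U_1$ and the data matrix is $B = \mathrm{mat}(\scr T, j)^\top \in \RR^{(\prod_{k\neq j}I_k)\times I_j}$. By part~2 of Theorem~\ref{thm:main_krp_res} the structure draws the $J$ row indices i.i.d.\ from the \emph{exact} leverage-score distribution on the rows of $A$; attaching the reweighting factor $1/\sqrt{J q_i}$ with $q_i = \ell_i/\mathrm{rank}(A)$ (obtainable in $O(R^2)$ per sample from $A\br{i,:}$ and the cached pseudoinverse of $A^\top A$, hence negligible) produces a leverage-score sampling matrix $S$ exactly in the sense of Section~\ref{sec:sketched_linear_lstsq}. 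Theorem~\ref{thm:lev_score_lowerbounds} then applies directly: choosing $J = \Omega\paren{R\max(\log(R/\delta),\,1/(\varepsilon\delta))}$ makes $\norm{A\tilde X - B}_F \le (1+\varepsilon)\min_X\norm{AX - B}_F$ hold with probability at least $1-\delta$, and under the standing assumption at the end of Section~\ref{sec:sketched_linear_lstsq} it suffices to take $J = \tilde O(R/(\varepsilon\delta))$, the polylogarithmic dependence on $R$ and $1/\delta$ being absorbed by $\tilde O$.

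For the runtime I would bound the cost of one ALS sweep over $j = 1,\dots,N$ and multiply by $\noiter$; the one-time call to \textsc{ConstructKRPSampler} costs $O(\sum_j I_j R^2)$ and is dominated by a single sweep. The $j$-th subproblem decomposes into four pieces. (i) Reconfiguring the sampler to exclude $U_j$ and drawing $J$ samples from $U_{\neq j}$: by Theorem~\ref{thm:main_krp_res} this costs $O\paren{NR^3 + J\sum_{k\neq j}R^2\log\max(I_k,R)}$. (ii) Forming the $J\times R$ sketched design by Hadamard-multiplying the sampled rows of $U_k$, $k\neq j$, at $O(JNR)$, and gathering the $J$ sampled rows of $B$ into the $J\times I_j$ matrix $SB$, at $O(JI_j)$ since $\scr T$ is dense. (iii) Solving the reduced problem by normal equations: $O(JR^2)$ to form $(SU_{\neq j})^\top(SU_{\neq j})$, $O(JRI_j)$ to form the right-hand side $(SU_{\neq j})^\top(SB)$, and $O(R^3 + R^2 I_j)$ for a Cholesky factorization and the $I_j$ triangular solves. (iv) Updating the data structure (and recomputing $G_j$) for the new $U_j$, at $O(I_j R^2)$ by part~1 of Theorem~\ref{thm:main_krp_res}.

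Summing over $j$ and collapsing dominated terms is the only nontrivial step. The sampling term contributes $O\paren{N^2 R^3 + J\sum_j\sum_{k\neq j}R^2\log\max(I_k,R)}$; since each index $k$ lies in exactly $N-1$ of the inner sums, $\sum_j\sum_{k\neq j}R^2\log\max(I_k,R) \le (N-1)\sum_k R^2\log\max(I_k,R)$, so with $J = \tilde O(R/(\varepsilon\delta))$ this is $\tilde O\paren{\tfrac{1}{\varepsilon\delta}\sum_j NR^3\log\max(I_j,R)}$, which is exactly $\tilde O\paren{\tfrac{1}{\varepsilon\delta}\sum_j NR^3\log I_j}$ with $\log I_j$ read as $\log R$ whenever $I_j < R$, since $\max(I_j,R) = R$ there. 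Among the $I_j$-dependent terms, $JRI_j = \tilde O(R^2 I_j/(\varepsilon\delta))$ dominates $JI_j$, $R^2 I_j$, and the update cost $I_j R^2$ (because $1/(\varepsilon\delta)\ge 1$), giving $\tilde O\paren{\tfrac{1}{\varepsilon\delta}\sum_j I_j R^2}$. Finally $O(N^2 R^3)$ and $O(JN^2 R) = \tilde O(N^2 R^2/(\varepsilon\delta))$ are both $\le \tilde O\paren{\tfrac{1}{\varepsilon\delta}\sum_j NR^3\log I_j}$ (using $\log I_j \ge 1$, valid at the scales considered, or the $\log R\ge 1$ substitution). Hence one sweep costs $\tilde O\paren{\tfrac{1}{\varepsilon\delta}\sum_j(NR^3\log I_j + I_j R^2)}$, and multiplying by $\noiter$ yields the claimed bound.

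The main obstacle is entirely in the bookkeeping of the summation rather than in any new idea: one must verify that reusing a single sampler across all $N$ subproblems of a sweep contributes only a factor $N$ --- not $N^2$ --- in front of $\sum_k R^2\log I_k$, which relies precisely on each factor matrix being excluded from exactly one subproblem and participating in the other $N-1$, and one must charge the data-matrix side correctly to the $I_j R^2$ term, the dominant contribution there being the $O(JRI_j)$ cost of forming the normal-equation right-hand side (this is also where the $1/(\varepsilon\delta)$ attached to the $I_j R^2$ term originates). Every remaining cost --- sampler reconfiguration, Hadamard assembly of the sketch, the Cholesky solve, and the incremental update --- is dominated once $J = \tilde O(R/(\varepsilon\delta))$ is substituted, so nothing beyond that substitution is needed to close the bound.
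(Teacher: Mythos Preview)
Your proposal is correct and follows essentially the same approach as the paper's proof: invoke Theorem~\ref{thm:lev_score_lowerbounds} for the accuracy claim, then bound the per-sweep cost by summing the sampling cost from Theorem~\ref{thm:main_krp_res}, the cost of assembling $SB$, the cost of the reduced solve, and the $O(I_jR^2)$ update over $j=1,\dots,N$, using that each index $k$ appears in $N-1$ of the inner sums. Your bookkeeping is in fact tighter than the paper's in one spot: you correctly pinpoint the $O(JRI_j)=\tilde O(R^2 I_j/(\varepsilon\delta))$ cost of forming $(SU_{\neq j})^\top(SB)$ as the source of the $I_jR^2/(\varepsilon\delta)$ term, whereas the paper's proof writes the solve cost as ``$O(JR^2)=\tilde O(I_jR^2/(\varepsilon\delta))$'' and relies on the update term (absorbed into the $1/(\varepsilon\delta)$ factor) to carry that contribution.
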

The proof appears in Appendix \appendixref{appendix:als_cp_decomp} and combines Theorem
\ref{thm:main_krp_res} with Theorem 
\ref{thm:lev_score_lowerbounds}. STS-CP also 
works for sparse tensors 
and likely provides a greater advantage here than the dense case, as sparse
tensors tend to have much larger mode size \cite{frosttdataset}. 
The complexity for sparse tensors
depends heavily on the sparsity structure and is difficult to predict.
Nevertheless, we expect a significant speedup
based on prior works that use sketching to accelerate CP decomposition 
\cite{cheng_spals_2016,larsen_practical_2022}. 

\section{Experiments}
Experiments were conducted on CPU nodes of NERSC Perlmutter, an HPE Cray 
EX supercomputer, 
and our code is available at \url{https://github.com/vbharadwaj-bk/fast_tensor_leverage.git}. 
On tensor decomposition experiments, we 
compare our algorithms against the random and hybrid versions of CP-ARLS-LEV
proposed by \citet{larsen_practical_2022}.
These algorithms outperform uniform sampling and row-norm-squared sampling, achieving excellent accuracy and runtime 
relative to exact ALS. In contrast to TNS-CP and the
Gaussian tensor network embedding proposed by Ma and Solomonik
(see Table 1), CP-ARLS-LEV is one of the few algorithms that can
practically decompose sparse tensors with mode sizes in the
millions. In the worst case, CP-ARLS-LEV requires 
$\tilde O(R^{N-1} / (\varepsilon \delta))$ samples per solve for an 
$N$-dimensional tensor to achieve solution guarantees like those in
Theorem \ref{thm:lev_score_lowerbounds}, compared to 
$\tilde O(R / (\varepsilon \delta))$ samples required by STS-CP.
Appendices \appendixref{appendix:efficient_parallel_impl}, 
\appendixref{appendix:sparse_cp}, and \appendixref{appendix:supp_results} 
provide configuration details and additional results.

\subsection{Runtime Benchmark}

\begin{figure}
\centering
\begin{minipage}[t]{.48\textwidth}
  \centering
    \includegraphics[scale=0.43]{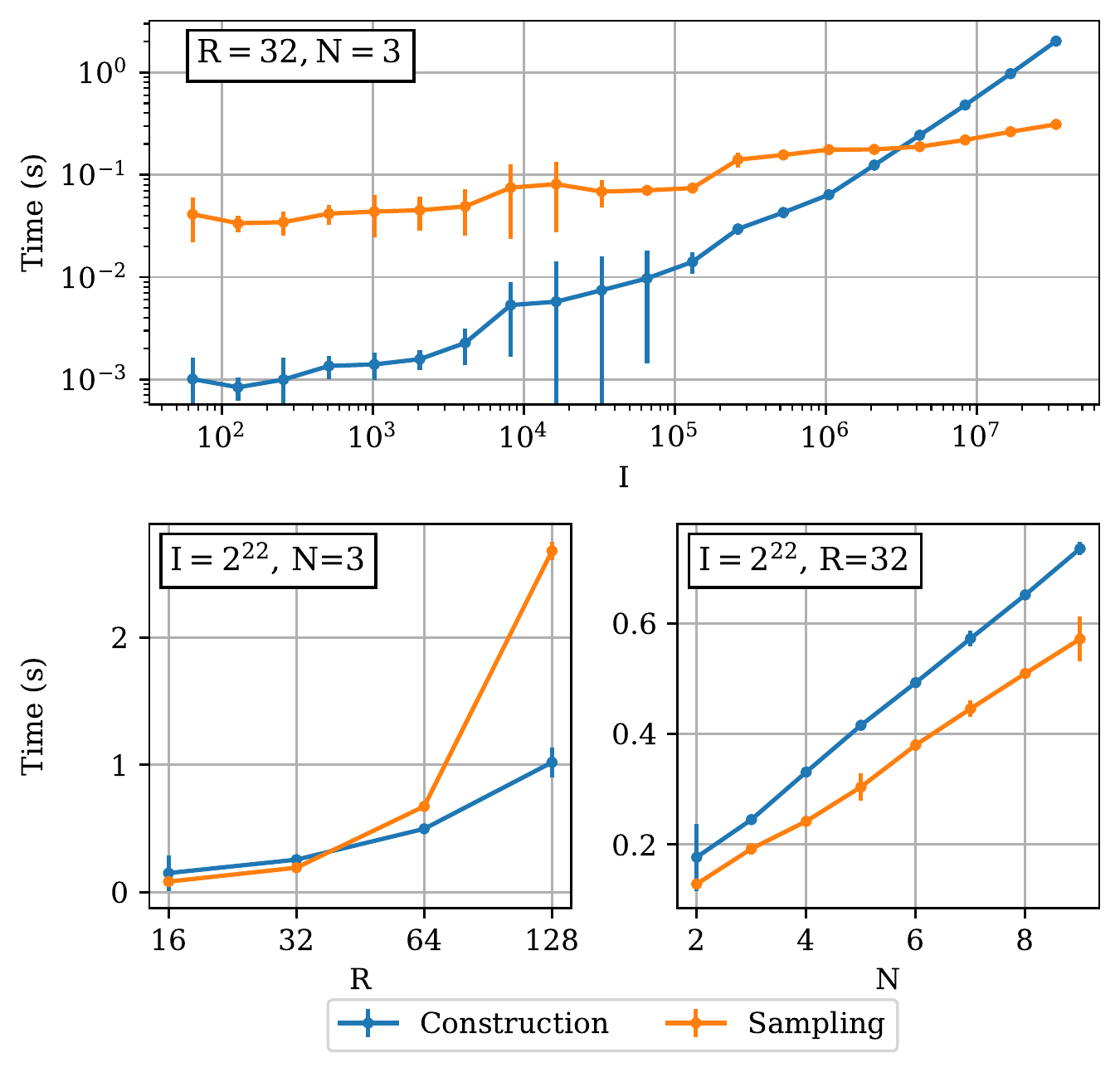}
        \captionof{figure}{Average time (5 trials) to construct our proposed sampler and draw $J=50,000$ samples from $U_1 \odot ... \odot U_N$, with
        $U_j \in \RR^{I \times R}\ \forall j$. Error bars indicate 3
        standard deviations.}
    \label{fig:runtime_benchmark} 
\end{minipage}
\hfill
\begin{minipage}[t]{.48\textwidth}
  \centering
  \includegraphics[scale=0.26]{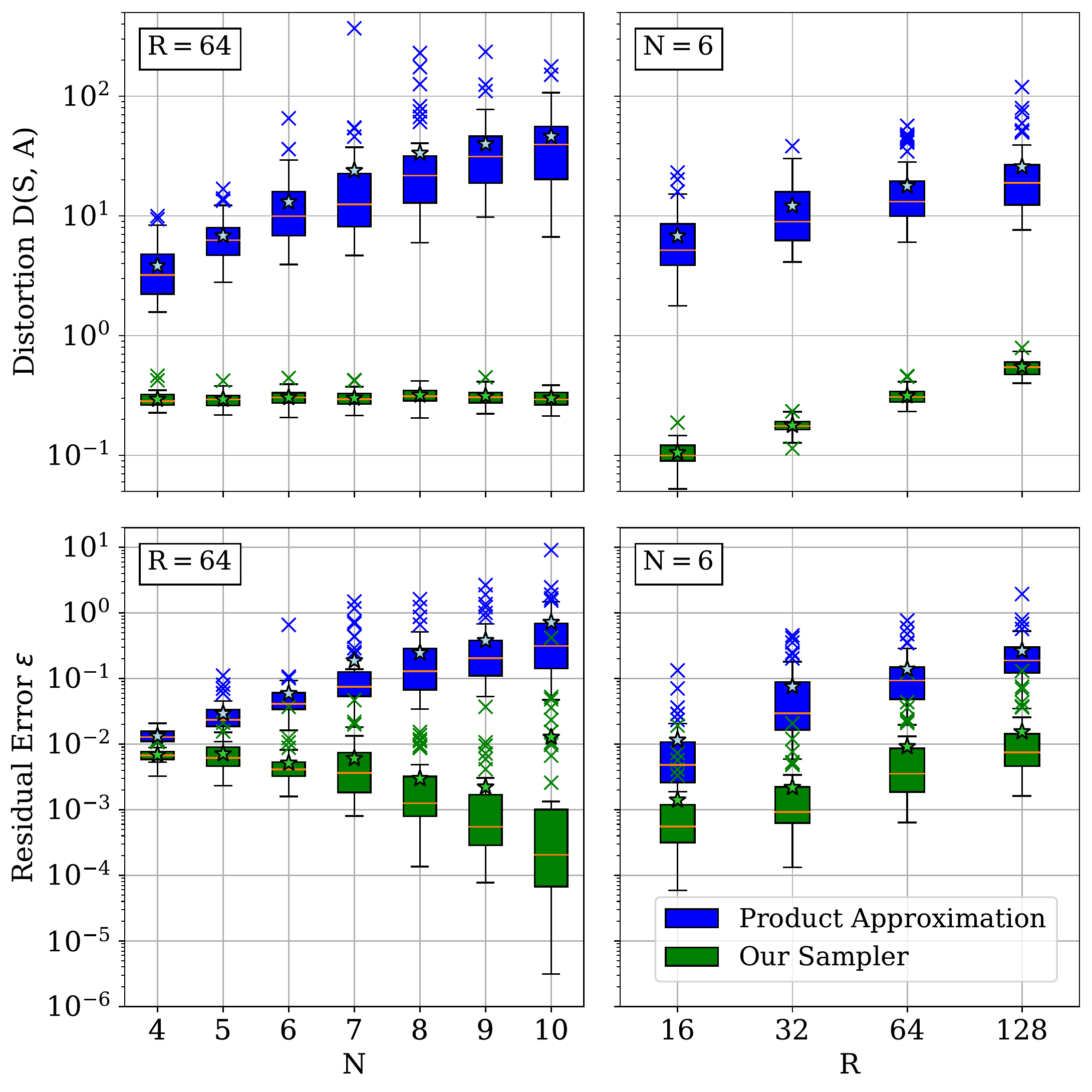}
        \captionof{figure}{Distortion and 
        residual error (50 
        trials) for varying $R$ and $N$ on 
        least squares, $I=2^{16}, J=5000$. ``X'' marks indicate
        outliers 1.5 times the interquartile range beyond the median,
        stars indicate means.
        }
    \label{fig:accuracy_bench} 
\end{minipage}
\end{figure}

Figure \ref{fig:runtime_benchmark} shows the time to construct our sampler and draw 50,000
samples from the Khatri-Rao product of i.i.d.\ Gaussian initialized factor matrices.
We quantify the runtime impacts of varying $N$, $R$, and $I$. The asymptotic
behavior in Theorem \ref{thm:main_krp_res} is reflected in our performance measurements, 
with the exception of the plot that varies $R$. Here, construction becomes 
disproportionately cheaper than sampling due to cache-efficient 
BLAS3 calls during construction. Even when the full Khatri-Rao product has 
$\approx 3.78 \times 10^{22}$ rows (for $I=2^{25}, N=3, R=32$), we require only $0.31$ seconds on average for sampling (top plot, rightmost points).

\subsection{Least Squares Accuracy Comparison}
We now test our sampler on least squares problems of the 
form $\min_x \norm{Ax-b}$, where $A = U_1 \odot ... \odot U_N$
with $U_j \in \RR^{I \times R}$ for all $j$. We initialize all matrices
$U_j$ entrywise i.i.d.\ from a standard normal distribution and randomly multiply
1\% of all entries by 10. We choose $b$ as a 
Kronecker product $c_1 \otimes ... \otimes c_N$, with each vector 
$c_j \in \RR^I$ also initialized entrywise from a Gaussian 
distribution. We assume this form for $b$ to tractably compute
the exact solution to the linear least squares problem and
evaluate the accuracy of our randomized methods. We  
\textbf{do not} give our algorithms access
to the Kronecker form of $b$; they are only permitted
on-demand, black-box access to its entries. 

For each problem instance, define the distortion of our sampling 
matrix $S$ with respect to the column space of $A$ as
\begin{equation}
D(S, A) = \kappa(SQ) - 1
\label{eq:distortion_defn}
\end{equation}
where $Q$ is an orthonormal basis for the column space of $A$ and
$\kappa(SQ)$ is the condition number of $SQ$.
A higher-quality sketch $S$ exhibits lower distortion, which 
quantifies the preservation of distances from the column space of $A$ 
to the column space of $SA$ \cite{murray2023randomized}. For details
about computing $\kappa(SQ)$ efficiently when $A$ is a Khatri-Rao product, 
see Appendix \ref{appendix:distortion}. Next, 
define $\varepsilon =\frac{\textrm{residual}_{\textrm{approx}}}{\textrm{residual}_{\textrm{opt}}}-1$,
where $\textrm{residual}_{\textrm{approx}}$ is the residual of the
randomized least squares algorithm. $\varepsilon$ is nonnegative 
and (similar to its role in Theorem \ref{thm:lev_score_lowerbounds}) quantifies the quality of the randomized algorithm's solution. 

For varying $N$ and $R$, Figure \ref{fig:accuracy_bench} shows 
the average values of $D$ and $\varepsilon$ achieved by
our algorithm against the leverage product approximation used by 
\citet{larsen_practical_2022}. Our sampler exhibits nearly constant 
distortion $D$ for fixed rank $R$ and varying $N$, and it achieves 
$\varepsilon \approx 10^{-2}$ even when $N = 9$. 
The product approximation increases both the distortion and
residual error by at least an order of magnitude.

\subsection{Sparse Tensor Decomposition}
\label{sec:sparse_tensor_cp_experiments}
\begin{figure*}
    \centering
    \includegraphics[scale=0.42]{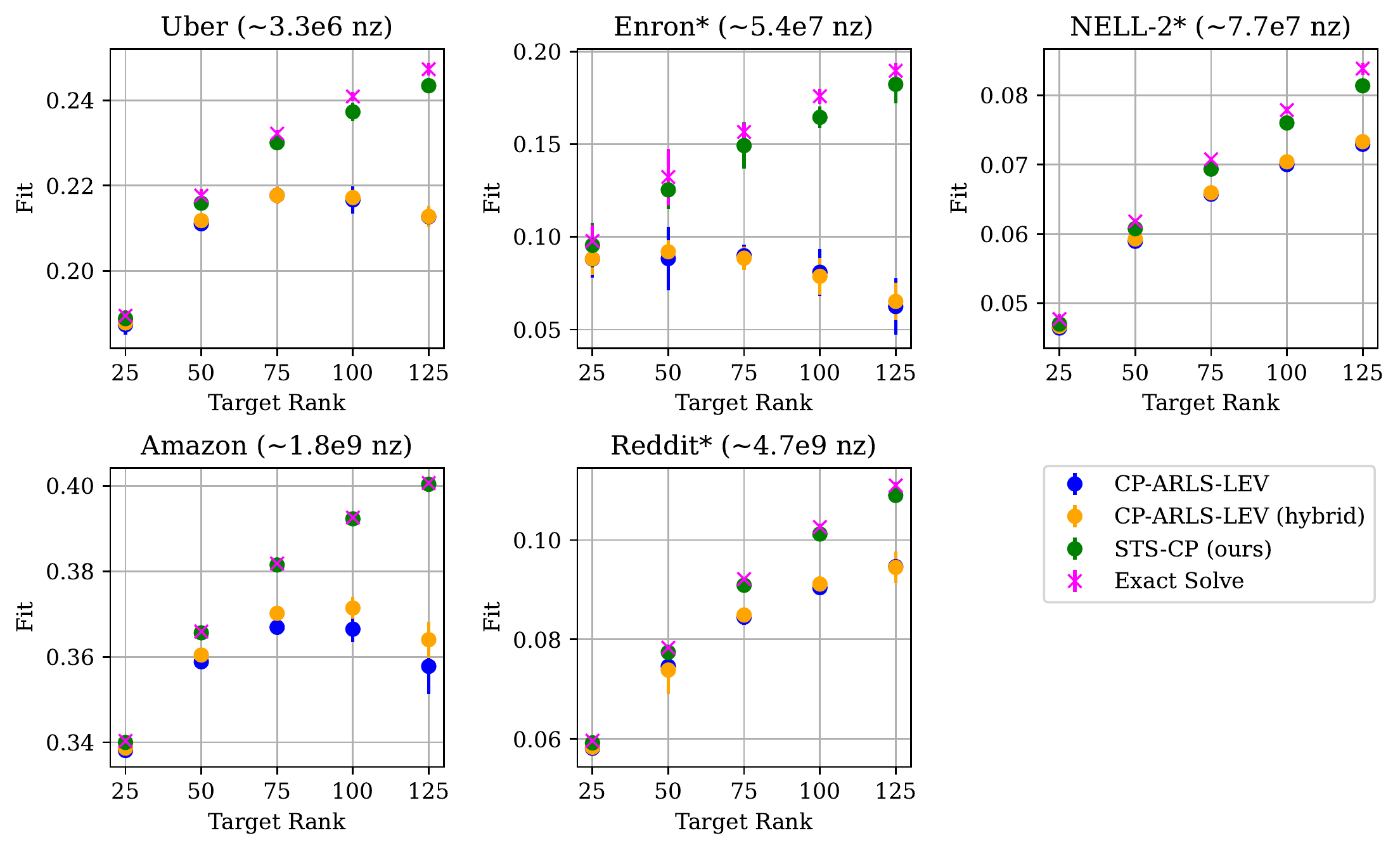}
    \caption{Average fits (8 trials) achieved by randomized ($J=2^{16}$) and exact ALS for
    sparse tensor CP decomposition. Error bars 
    indicate 3 standard deviations. See Appendix \appendixref{appendix:sparse_cp} for details.}
    \label{fig:accuracy_comparison} 
\end{figure*}

We next apply STS-CP to decompose several large sparse tensors from the
FROSTT collection \cite{frosttdataset} (see Appendix \appendixref{appendix:sparse_cp} for more
details on the experimental configuration). 
Our accuracy metric is the tensor fit. Letting $\tilde{\scr T}$ be our low-rank CP approximation, the fit
with respect to ground-truth tensor $\scr T$ is 
$\textrm{fit}(\tilde{\scr T}, \scr T) = 1 - \norm{\tilde{\scr T} - \scr T}_F / \norm{\scr T}_F$. 

Table \ref{tab:baseline-comparison} in Appendix
\ref{appendix:cp_baseline_comparison} compares
the runtime per round of our algorithm against the
Tensorly Python package \cite{tensorly} 
and Matlab Tensor Toolbox \cite{ttoolbox_sparse}, 
with dramatic speedup over both. As 
Figure \ref{fig:accuracy_comparison} shows,
the fit achieved by CP-ARLS-LEV compared to STS-CP degrades as the rank increases for fixed sample
count. By contrast, STS-CP improves the fit consistently, with a significant improvement at rank 125 over CP-ARLS-LEV. Timings
for both algorithms are available in Appendix 
\appendixref{appendix:sts-cp-speedup}. 
Figure \ref{fig:exact_solve_comparison_sparse} explains the higher fit achieved by our sampler on the Uber and Amazon tensors. In the first 10 rounds of 
ALS, we compute the exact solution to each least squares problem
before updating the factor matrix with a randomized algorithm's solution. Figure \ref{fig:exact_solve_comparison_sparse} 
plots $\varepsilon$ as ALS progresses for hybrid CP-ARLS-LEV and STS-CP. 
The latter consistently achieves lower residual per solve. 
We further observe that 
CP-ARLS-LEV exhibits an oscillating error pattern with period matching the
number of modes
$N$.

To assess the tradeoff between sampling time and accuracy, we
compare the fit as a function of ALS update time for STS-CP and random CP-ARLS-LEV
in Figure \ref{fig:reddit_fit_function_time} (time to compute the fit excluded). On the Reddit tensor 
with $R=100$, we compared CP-ARLS-LEV with $J=2^{16}$ against CP-ARLS-LEV with progressively
larger sample count. Even with $2^{18}$ samples per randomized least squares solve, 
CP-ARLS-LEV cannot achieve the maximum fit of STS-CP. Furthermore, 
STS-CP makes progress more quickly than CP-ARLS-LEV. See
Appendix \appendixref{appendix:extra_fit_times} for similar plots for other datasets. 

\begin{figure}
\centering
\begin{minipage}[t]{.48\textwidth}
  \centering
  \includegraphics[scale=0.40]{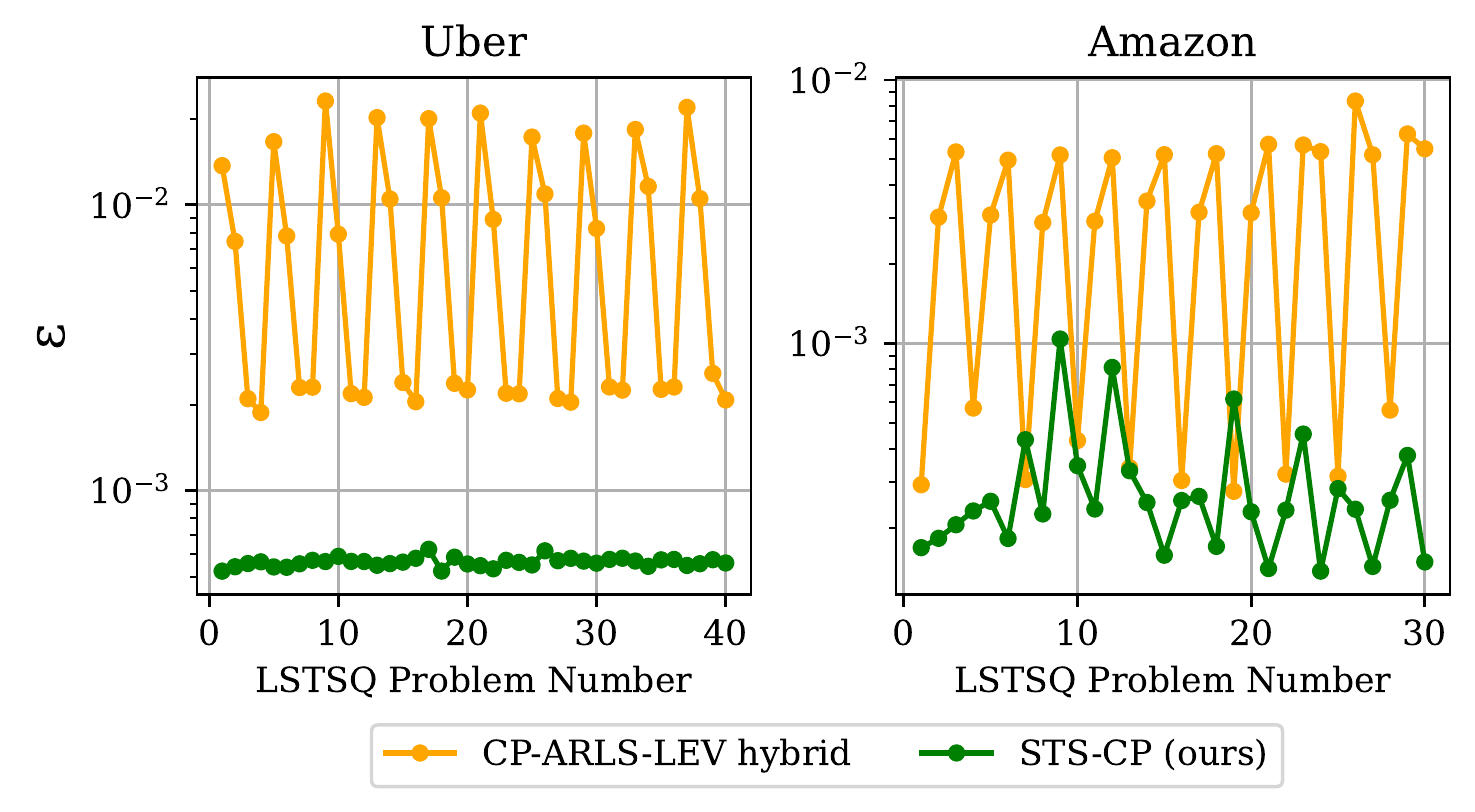}
    \captionof{figure}{Average $\varepsilon$ (5 runs) 
    for randomized least squares 
    solves in 10 ALS rounds, $R=50$.}
    \label{fig:exact_solve_comparison_sparse}
\end{minipage}
\hfill
\begin{minipage}[t]{.48\textwidth}
  \centering
    \includegraphics[scale=0.35]{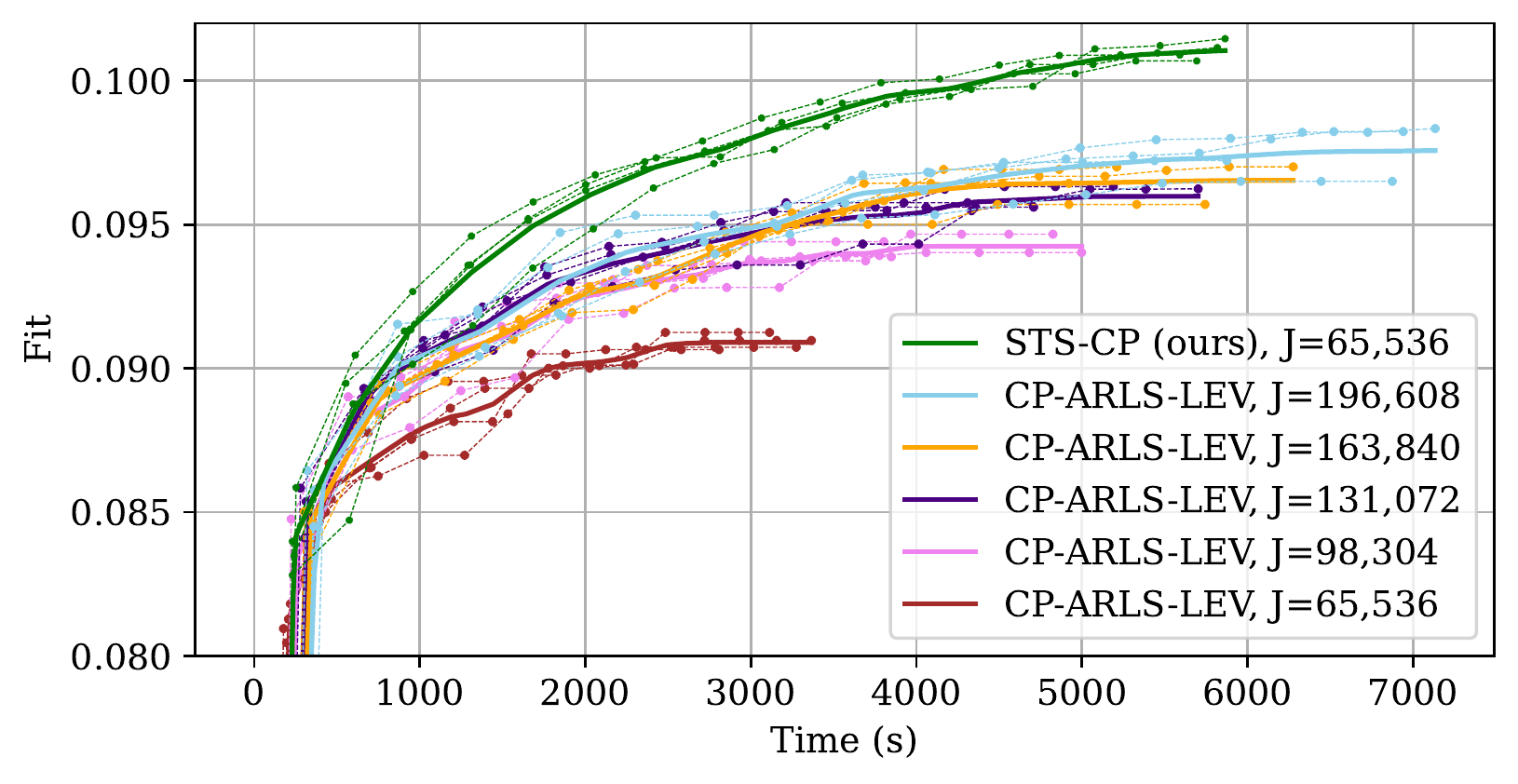}
    \captionof{figure}{Fit vs.\ time, Reddit tensor, $R=100$. Thick lines are averages 4 trial 
    interpolations.} 
    \label{fig:reddit_fit_function_time}
\end{minipage}
\end{figure}

\section{Discussion and Future Work}
Our method for exact Khatri-Rao leverage score sampling enjoys 
strong theoretical guarantees and practical performance benefits. 
Especially for massive tensors such as Amazon and
Reddit, our randomized algorithm's guarantees translate to faster
progress to solution and higher final accuracies. The segment 
tree approach described here can be applied to sample from tensor
networks besides the Khatri-Rao product. In particular, modifications to Lemma
\ref{lemma:efficient_row_sampler} permit efficient leverage sampling from a contraction of 3D tensor
cores in ALS tensor train decomposition. We leave the generalization of 
our fast sampling technique as future work.
\clearpage

\section*{Acknowledgements, Funding, and Disclaimers}
We thank the referees for valuable feedback which helped improve the paper.

V. Bharadwaj was supported by the U.S. Department 
of Energy, Office of Science, Office of Advanced Scientific 
Computing Research, Department of Energy Computational Science 
Graduate Fellowship under Award Number DE-SC0022158. O. A. Malik and
A. Bulu\c{c} were supported by the Office of Science of the DOE
under Award Number DE-AC02-05CH11231. L. Grigori was supported 
by the European Research Council (ERC) under the European
Union’s Horizon 2020 research and innovation program through grant agreement 810367. R. Murray was supported by Laboratory 
Directed Research and Development (LDRD) funding from Berkeley Lab, provided by the Director, Office of Science, of the U.S. DOE 
under Contract No. DE-AC02-05CH11231. R. Murray was also funded by an NSF Collaborative Research Framework under NSF Grant Nos. 2004235 and 2004763. This research used resources 
of the National Energy Research Scientific Computing Center, a DOE Office 
of Science User Facility, under Contract No. DE-AC02-05CH11231 using 
NERSC award ASCR-ERCAP0024170. 

This report was prepared as an account of work sponsored by an agency of the
United States Government. Neither the United States Government nor any agency thereof, nor
any of their employees, makes any warranty, express or implied, or assumes any legal liability
or responsibility for the accuracy, completeness, or usefulness of any information, apparatus,
product, or process disclosed, or represents that its use would not infringe privately owned
rights. Reference herein to any specific commercial product, process, or service by trade name,
trademark, manufacturer, or otherwise does not necessarily constitute or imply its
endorsement, recommendation, or favoring by the United States Government or any agency
thereof. The views and opinions of authors expressed herein do not necessarily state or reflect
those of the United States Government or any agency thereof.

\bibliographystyle{plainnat}
\bibliography{references}
\clearpage


\appendix
\section{Appendix}
\subsection{Details about Table \ref{tab:cp-complexity-comparison}}
\label{appendix:intro}
CP-ALS \cite{kolda_tensor_overview}
is the standard, non-randomized
alternating least squares method given by
Algorithm \ref{alg:als_algorithm} in
Appendix \appendixref{appendix:als_cp_decomp}.
The least squares problems in the algorithm
are solved by exact methods. 
CP-ARLS-LEV is the algorithm proposed by \citet{larsen_practical_2022} that samples rows from the 
Khatri-Rao product according to a product distribution of
leverage scores on each factor matrix. The per-iteration runtimes  
for both algorithms are re-derived in 
Appendix C.3 of the work by \citet{malik_efficient_2022} from their 
original sources. \citet{malik_efficient_2022} 
proposed the CP-ALS-ES algorithm (not listed in the table),
which is superseded by the TNS-CP algorithm \cite{malik_tns_cp}. We report the complexity
from Table 1 of the latter work. The algorithm by 
\citet{ma_cost_efficient_embedding} is based on a general method to sketch tensor
networks. Our reported complexity is listed in Table 1 for Algorithm 1 in their work. 

Table \ref{tab:cp-complexity-comparison} does not list the one-time initialization costs for any of the
methods. All methods require at least $O(NIR)$ time to randomly initialize factor matrices, and CP-ALS
requires no further setup. CP-ARLS-LEV, TNS-CP, and STS-CP all require $O(NIR^2)$ initialization time. 
CP-ARLS-LEV uses the initialization phase to compute the initial leverage scores of all factor matrices. 
TNS-CP uses the initialization step to compute and cache Gram matrices of all factors $U_j$.
STS-CP must build the efficient sampling data structure described in Theorem \ref{thm:main_krp_res}. The
algorithm from Ma and Solomonik requires an initialization cost of $O(I^N m)$, where $m$ is
a sketch size parameter that is $O(NR / \varepsilon^2)$ to achieve the $(\varepsilon, \delta)$ accuracy guarantee for each
least squares solve.

\subsection{Definitions of Matrix Products}
\label{appendix:notation}

Table \ref{tab:matrix-product-definitions} defines the standard
matrix product $\cdot$, Hadamard product $\circledast$, Kronecker
product $\otimes$, and Khatri-Rao product $\odot$, as well
as the dimensions of their operands.

\begin{table}[htb]
\caption{Matrix product definitions.} 
\label{tab:matrix-product-definitions}
\begin{center}
\begin{small}
\begin{sc}
	\begin{tabular}{llllll}  
		\toprule
		Operation & Size of $A$ & Size of $B$ & Size of $C$ & Definition\\
		\midrule
        $C=A \cdot B$ & $(m, k)$ & $(k, n)$ &
        $(m, n)$ &
        $C[i,j] = \sum_{a=1}^k A\br{i, a} B\br{a, j}$ 
        \\
        $C=A \circledast B$ & $(m, n)$ & $(m, n)$ &
        $(m, n)$ 
        & $C \br{i, j} = A\br{i, j} B \br{i, j}$ 
        \\ 
        $C=A \otimes B$ & $(m_1, n_1)$ & $(m_2, n_2)$ & $(m_2 m_1, n_2 n_1)$ & $C\br{(i_2, i_1), (j_2, j_1)} = A\br{i_1, j_1} B \br{i_2, j_2}$ \\ 
        $C = A \odot B$ & $(m_1, n)$ & $(m_2, n)$ & $(m_2 m_1, n)$ 
        &  $C\br{(i_2, i_1), j} = A\br{i_1, j} B \br{i_2, j}$ 
        \\
		\bottomrule
	\end{tabular} 
\end{sc}
\end{small}
\end{center}
\end{table}

\subsection{Further Comparison to Prior Work}
\label{appendix:wz_comparison}
In this section, we provide a more detailed comparison of our sampling algorithm with the one 
proposed by \citet{woodruff_zandieh}. Their work introduces a ridge 
leverage-score sampling algorithm for Khatri-Rao products with the attractive property that the 
sketch can be formed in input-sparsity time. For constant
failure probability $\delta$, the runtime to produce
a $(1 \pm \epsilon)$ $\ell_2$-subspace embedding for $A = U_1 \odot ... \odot U_N$ is given in Appendix 
B of their work (proof of Theorem 2.7). Adapted to our notation, \textbf{their runtime} is 
\[
O \paren{
\log^4 R \log N \sum_{i=1}^N \textrm{nnz}(U_i) +
\frac{N^7 s_\lambda^2 R}{\varepsilon^4} \log^5 R \log N 
}
\]
where $s_{\lambda} = \sum_{i=1}^R \frac{\lambda_i}{\lambda_i + \lambda}$, $\lambda_1,...,\lambda_R$ 
are the eigenvalues of the Gram matrix $G$ of matrix $A$, and $\lambda \geq 0$ is 
a regularization parameter. For comparison, \textbf{our runtime} for constant failure probability $\delta$ is 
\[
O \paren{
R \sum_{i=1}^N \textrm{nnz}(U_i) +
\frac{R^3}{\varepsilon} \log \paren{\prod_{i=1}^N I_i} \log R}.
\]
Woodruff and Zandieh's method provides a significant advantage for large column count $R$ or 
high regularization parameter $\lambda$. As a result, it is well-suited to the problem of 
regularized low-rank approximation when the column count $R$ is given by the number of data points
in a dataset. On the other hand, the algorithm has poor dependence on 
the matrix count $N$ and error parameter $\varepsilon$. For tensor decomposition,
$R$ is typically no larger than a few hundred, while high accuracy ($\epsilon \approx 10^{-3}$)
is required for certain tensors to achieve a fit competitive with non-randomized methods (see
section \ref{sec:sparse_tensor_cp_experiments}, Figures \ref{fig:accuracy_comparison} 
and \ref{fig:exact_solve_comparison_sparse}). 
When $\lambda$ is small, we have $s_\lambda \approx R$. 
Here, Woodruff and Zandieh's runtime has an $O(R^3)$ dependence
similar to ours. When $R \leq \log^4 R \log N$, our sampler 
has faster construction time as well. 

Finally, we note that our sampling data structure can be constructed using
highly cache-efficient, parallelizable symmetric rank-$R$ updates (BLAS3 operation \verb|dSYRK|).
As a result, the quadratic dependence on $R$ in our algorithm can be mitigated by dense linear 
algebra accelerators, such as GPUs or TPUs. 

\subsection{Proof of Theorem \ref{thm:malik2022}}
\label{appendix:malik_reproof}
Theorem \ref{thm:malik2022} appeared in a modified form as Lemma 10 in the work by \citet{malik_efficient_2022}. 
This original version used the definition 
$\tilde G_{>k} = \Phi \circledast \startimes_{a=k+1}^N G_k$ in place of $G_{>k}$
defined in Equation \eqref{eq:G_geq_k_defn}, where $\Phi$ was a sketched approximation of $G^+$.
\citet{woodruff_zandieh} exhibit a version of the theorem with similar modifications. We prove the version stated in our work below. 
\begin{proof}[Proof of Theorem \ref{thm:malik2022}]
We rely on the assumption that the Khatri-Rao product $A$ is
a nonzero matrix (but it may be rank-deficient). 
We begin by simplifying the expression for the leverage score
of a row of $A$ corresponding to multi-index $(i_1, ..., i_N)$.
Beginning with Equation \eqref{eq:krp_leverage}, we derive
\begin{equation}
\begin{aligned}
\ell_{i_1, ..., i_{N}} 
&=A \br{(i_1, ..., i_{N}), :} G^+ A \br{(i_1, ..., i_{N}), :}^\top \\
&=\gen{A \br{(i_1, ..., i_{N}), :}^\top A \br{(i_1, ..., i_{N}), :}, G^+} \\
&=\gen{\paren{ \startimes_{a=1}^{N} U_a\br{i_a, :}}^\top 
\paren{ \startimes_{a=1}^{N} U_a\br{i_a, :}}, G^+} \\
&=\gen{\startimes_{a=1}^N U_a\br{i_a, :}^\top U_a \br{i_a, :}, G^+} \\
&=\gen{\startimes_{a=1}^{k-1} U_a\br{i_a, :}^\top U_a \br{i_a, :},
U_k \br{i_k, :}^\top U_k \br{i_k, :} \circledast
\startimes_{a=k+1}^{N} U_a\br{i_a, :}^\top U_a \br{i_a, :}, G^+} \\
&=\gen{\startimes_{a=1}^{k-1} U_a\br{i_a, :}^\top U_a \br{i_a, :}, 
U_k \br{i_k, :}^\top U_k \br{i_k, :}, G^+
\circledast \startimes_{a=k+1}^{N} U_a\br{i_a, :}^\top U_a \br{i_a, :}}.
\label{eq:malik_deriv_1}
\end{aligned}
\end{equation}
We proceed to the main proof of the theorem. To compute $p(\hat s_k = s_k\ \vert\ \hat s_{<k} = s_{<k})$, we marginalize over
random variables $\hat s_{k+1} ... \hat s_N$. Recalling the definition of 
$h_{<k}$ from Equation \eqref{eq:h_leq_k_defn}, we have 
\begin{equation}
\begin{aligned}
p(\hat s_k = s_k\ \vert\ \hat s_{<k} = s_{<k})
&\propto \sum_{i_{k+1}, ..., i_{N}} 
p\paren{(\hat s_{<k} = s_{<k}) \wedge (\hat s_k = s_k) \wedge
\bigwedge_{u=k+1}^{N} (\hat s_u = i_u)} \\
&\propto \sum_{i_{k+1}, ..., i_{N}} 
\ell_{s_1, ..., s_k, i_{k+1}, ..., i_{N}}. \\
\end{aligned}
\end{equation}
The first line above follows by marginalizing over
$\hat s_{k+1}, ..., \hat s_N$. The second line follows
because the joint random variable $(\hat s_1, ..., \hat s_N)$ follows
the distribution of statistical leverage scores on the rows
of $A$. We now plug in Equation \eqref{eq:malik_deriv_1} to get
\begin{equation}
\begin{aligned}
&\sum_{i_{k+1}, ..., i_{N}} 
\ell_{s_1, ..., s_k, i_{k+1}, ..., i_{N}} \\
&= \sum_{i_{k+1}, ..., i_{N}} \gen{\startimes_{a=1}^{k-1} U_a\br{s_a, :}^\top U_a \br{s_a, :}, 
U_k \br{s_k, :}^\top U_k \br{s_k, :}, G^+
\circledast \startimes_{a=k+1}^{N} U_a\br{i_a, :}^\top U_a \br{i_a, :}} \\
&= \sum_{i_{k+1}, ..., i_{N}} \gen{h_{<k} h_{<k}^\top, 
U_k \br{s_k, :}^\top U_k \br{s_k, :}, G^+
\circledast \startimes_{a=k+1}^{N} U_a\br{i_a, :}^\top U_a \br{i_a, :}} \\
&= \gen{h_{<k} h_{<k}^\top, 
U_k \br{s_k, :}^\top U_k \br{s_k, :}, G^+
\circledast \startimes_{a=k+1}^{N} 
\sum_{i_a=1}^{I_a} U_a\br{i_a, :}^\top U_a \br{i_a, :}} \\
&= \gen{h_{<k} h_{<k}^\top, 
U_k \br{s_k, :}^\top U_k \br{s_k, :}, G^+
\circledast \startimes_{a=k+1}^{N} G_a} \\
&= \gen{h_{<k} h_{<k}^\top, 
U_k \br{s_k, :}^\top U_k \br{s_k, :}, G_{>k}}.
\label{eq:malik_thm_deriv2}
\end{aligned}
\end{equation}
We now compute the normalization constant $C$ for the distribution by summing the last line of
Equation \eqref{eq:malik_thm_deriv2} over all possible values for $\hat s_k$:
\begin{equation}
\begin{aligned}
C &= \sum_{s_k=1}^{I_k} \gen{h_{<k} h_{<k}^\top, 
U_k \br{s_k, :}^\top U_k \br{s_k, :}, G_{>k}} \\
&=  \gen{h_{<k} h_{<k}^\top, \sum_{s_k=1}^{I_k} U_k \br{s_k, :}^\top U_k \br{s_k, :}, G_{>k}} \\
&= \gen{h_{<k} h_{<k}^\top, G_k, G_{>k}}.
\end{aligned}
\end{equation}
For $k = 1$, we have $h_{<k} = \br{1, ..., 1}^\top$, so
$C = \gen{G_k, G_{>k}}$. Then $C$ is the sum of all leverage scores, 
which is known to be the rank of $A$ \cite{woodruff2014sketching}. 
Since $A$ was assumed nonzero, $C \neq 0$. For $k > 1$, assume that the
conditioning event $\hat s_{<k} = s_{<k}$ occurs with nonzero probability. This is a reasonable
assumption, since our sampling algorithm will never 
select prior values 
$\hat s_1, ..., \hat s_{k-1}$ that
have 0 probability of occurrence. Let $\tilde C$
be the normalization constant for the conditional 
distribution on $\hat s_{k-1}$. Then we have 
\begin{equation}
\begin{aligned}
0
&<p(\hat s_{k-1} = s_{k-1}\ \vert\ \hat s_{<k-1} = s_{<k-1}) \\
&= \tilde C^{-1} \gen{h_{<k-1} h_{<k-1}^\top, 
U_{k-1}\br{s_{k-1}, :}^\top U_{k-1}\br{s_{k-1}, :}, G_{>k-1}} \\
&= \tilde C^{-1} \gen{h_{<k} h_{<k}^\top, G_{>k-1}} \\
&= \tilde C^{-1} \gen{h_{<k} h_{<k}^\top, G_k \circledast G_{>k}} \\
&= \tilde C^{-1} \gen{h_{<k} h_{<k}^\top, G_k, G_{>k}} \\
&= \tilde C^{-1} C
\end{aligned}
\end{equation}
Since $\tilde C > 0$, we must have $C > 0$.
\end{proof}

\subsection{Proof of Lemma \ref{lemma:efficient_row_sampler}}
\label{appendix:efficient_lemma_proof}
\begin{proof}[\unskip\nopunct] We detail the construction procedure, 
sampling procedure, and correctness of our proposed data structure. Recall
that $T_{I, F}$ denotes the collection of nodes in a full, complete
binary tree with $\ceil{I / F}$ leaves. Each leaf $v \in T_{I, F}$ holds a 
segment $S(v) = \set{S_0(v)..S_1(v)} \subseteq \set{1..I}$, with $\abs{S(v)} \leq F$ and
$S(u) \cap S(v) = \varnothing$ for distinct leaves $u, v$. For each
internal node $v$, $S(v) = S(L(v)) \cup S(R(v))$, where $L(v)$ and
$R(v)$ denote the left and right children of node $v$. The root node $r$
satisfies $S(r) = \set{1..I}$.

\textbf{Construction:} Algorithm \ref{alg:row_sampler_construction} gives the procedure
to build the data structure. We initialize a segment tree $T_{I, F}$ and 
compute $G^v$ for all leaf nodes $v \in T_{I, F}$ as a sum of outer products of
rows from $U$ (lines 1-3). Starting at the level above the leaves, we then compute
$G^v$ for each internal node as the sum of $G^{L(v)}$ and $G^{R(v)}$, the partial Gram matrices of its two children. Runtime $O(I R^2)$ 
is required to compute $I$ outer products across all 
iterations of the loop on line 3. Our  
segment tree has $\ceil{I / F} - 1$ internal nodes, and the addition in line 6
contributes runtime $O(R^2)$ for each internal node. This adds complexity 
$O(R^2(\ceil{I / F} - 1)) \leq O(I R^2)$, for total construction time $O(I R^2)$.

To analyze the space complexity, observe that we store a matrix 
$G^v \in \RR^{R \times R}$ at all $2 \ceil{I / F} - 1$ nodes of the segment 
tree, for asymptotic space usage $O(\ceil{I/F} R^2)$. We can cut the space usage in half 
by only storing $G^v$ when $v$ is either the root or a left child in our tree, 
since the sampling procedure in 
Algorithm \ref{alg:row_sampler_sampling} only
accesses the partial Gram matrix stored by left children. 
We can cut the space 
usage in half again by only storing the upper triangle 
of each symmetric matrix $G^v$. Finally, in the special case that
$I < F$, the segment tree has depth 1 and the initial
binary search can be eliminated entirely. As a result, 
the data structure has $O(1)$ space
overhead, since we can avoid storing any partial Gram matrices $G^v$. This proves the
complexity claims in point 1 of Lemma \ref{lemma:efficient_row_sampler}.

\begin{algorithm}
   \caption{BuildSampler($U \in \RR^{I \times R}$, $F$, $Y$)}
   \label{alg:row_sampler_construction}
\begin{algorithmic}[1]
        \STATE Build tree $T_{I, F}$ with depth $d = \ceil{\log \ceil{I / F}}$
        \FOR{$v \in \textrm{leaves}(T_{I, F})$}
            \STATE $G^v := \sum_{i \in S(v)} U\br{i, :}^\top U\br{i, :}$
        \ENDFOR
        \FOR{$u=d-2...0$}
            \FOR{$v \in \textrm{level}(T_{I, F}, u)$}
                \STATE $G^{v} := G^{L(v)} + G^{R(v)}$
            \ENDFOR
        \ENDFOR        
\end{algorithmic}
\end{algorithm}

\textbf{Sampling: } Algorithm \ref{alg:row_sampler_sampling} gives the procedure
to draw a sample from our proposed data structure. It is easy to verify that
the normalization constant $C$ for $q_{h, U, Y}$ is 
$\gen{h h^\top, G^{\textrm{root}(T_{I, F})}, Y}$, since $G^{\textrm{root}(T_{I,F})} 
= U^\top U$. Lines 8 and 9 initialize a pair of 
templated procedures $\tilde m$ and $\tilde q$, each of which accepts a node
from the segment tree. The former is used to compute the 
branching threshold at
each internal node, and the latter returns the probability vector
$q_{h, U, Y}\br{S_0(v):S_1(v)}$ for the segment $\set{S_0(v)..S_1(v)}$ maintained by
a leaf node. To see this last fact, observe for $i \in \br{I}$ that 
\begin{equation}
\begin{aligned}
    &\tilde q(v)\br{i - S_0(v)} \\
    &= C^{-1} U\br{i, :} \cdot (h h^\top \circledast Y) \cdot U\br{i, :}^\top \\
    &= C^{-1} \gen{h h^\top, U \br{i, :}^\top U \br{i, :}, Y} \\ 
    &= q_{h, U, Y} \br{i}.
    \label{eq:tilde_q_correctness}
\end{aligned}
\end{equation}
The loop on line 12 performs the binary search using the two
templated procedures. Line 18 uses the procedure $\tilde q$ to scan through at most 
$F$ bin endpoints after the binary search finishes early.

The depth of segment tree $T_{I, F}$ is $\log \ceil{I / F}$. As a
result, the runtime of the sampling procedure is dominated by 
$\log \ceil{I / F}$ evaluations of $\tilde m$
and a single evaluation of $\tilde q$ during the binary search. Each execution of 
procedure $\tilde m$ 
requires time $O(R^2)$, relying on the partial Gram matrices $G^v$ 
computed during the construction phase. When $Y$ is a general 
p.s.d. matrix, the runtime of $\tilde q$ is $O(FR^2)$.
This complexity is dominated by the matrix multiplication
$W \cdot (h h^\top \circledast Y)$ on line 5. 
In this case, the runtime of the ``RowSampler" procedure to draw one sample is 
$O(R^2 \log \ceil{I / F} + F R^2)$, satisfying the complexity claims in point 2 of the
lemma.

Now supppose $Y$ is a rank-1 matrix with
$Y = u u^\top$ for some vector $u$.
We have $h h^\top \circledast Y 
= (h \circledast u) 
(h \circledast u)^\top$. This gives 
\[
\tilde q_p(h, C, v) = 
\textrm{diag} (W \cdot (h h^\top
\circledast u u^\top) 
\cdot W) = (W \cdot (h \circledast u))^2
\]
where the square is elementwise. The runtime of the procedure $\tilde q$ is now dominated 
by a matrix-vector multiplication that costs time $O(FR)$. In this case, we have 
per-sample complexity $O(R^2 \log \ceil{I / F} + F R)$, matching the complexity claim in 
point 3
of the lemma.
\begin{algorithm}
   \caption{Row Sampling Procedure}
   \label{alg:row_sampler_sampling}
\begin{algorithmic}[1]
        \REQUIRE Matrices $U, Y$ saved from construction, partial Gram matrices $\set{G^v\ \vert\ v \in T_{I, F}}$.
        \STATE \textbf{procedure} $m_p(h, C, v)$
        \begin{ALC@g}
            \STATE \textbf{return} $C^{-1} \gen{h h^\top, G^{v}, Y}$ 
        \end{ALC@g} 

        \STATE \textbf{procedure} $q_p(h, C, v)$
        \begin{ALC@g}
            \STATE $W := U\br{S(v), :}$
            \STATE \textbf{return} $C^{-1} \textrm{diag}(W \cdot (h h^\top \circledast Y) \cdot W^\top)$
        \end{ALC@g}  
        \STATE \textbf{procedure} RowSample($h$) 
        \begin{ALC@g}
            \STATE $C := \gen{h h^\top, G^{\textrm{root}(T_{I, F})}, Y}$
            \STATE $\tilde m(\cdot) := m_p(h, C, \cdot)$ 
            \STATE $\tilde q(\cdot) := q_p(h, C, \cdot)$ 
            \STATE $c := \rt{T_{I,F}}, \textrm{low}=0.0, \textrm{high}=1.0$
            \STATE Sample $d \sim \textrm{Uniform}(0.0, 1.0)$
            \WHILE {$c \notin \textrm{leaves}(T_{I,F})$} \label{rss_alg:binsearch}
                \STATE $\textrm{cutoff} := \textrm{low} + \tilde m(L(c))$
                \IF{$\textrm{cutoff} \geq d$}
                    \STATE $c := L(c), \textrm{high} := \textrm{cutoff}$
                \ELSE
                    \STATE $c := R(c), \textrm{low} := \textrm{cutoff}$ 
                \ENDIF
            \ENDWHILE
        
            \STATE \textbf{return} $S_0(v) + \argmin_{i \geq 0} 
                        \paren{\textrm{low} + \sum_{j=1}^i \tilde q(c) \br{j} < d}$
        \end{ALC@g}  
\end{algorithmic}
\end{algorithm}

\textbf{Correctness:} Recall that the inversion sampling procedure partitions the
interval $\br{0, 1}$ into $I$ bins, the $i$-th bin having width $q_{h, U, Y}\br{i}$. 
The goal of our procedure is to find the bin that contains the uniform random draw $d$.
Since procedure $\tilde m$ correctly returns the branching threshold
(up to the offset ``low'') given by Equation \eqref{eq:partial_gram}, the loop on line 12 correctly
implements a binary search on the list of bin endpoints specified by the vector $q_{h, U, Y}$.
At the end of the loop, $c$ is a leaf node that maintains a
collection $S(c)$ of bins, one
of which contains the random draw $d$. Since the procedure $\tilde q$ correctly returns
probabilities $q_{h, U, Y}\br{i}$ for $i \in S(c)$ for leaf
node $c$, (see 
Equation \eqref{eq:tilde_q_correctness}), line 18
finds the bin that contains 
the random draw $d$. The correctness of the procedure 
follows from the correctness of inversion
sampling \cite{saad_optimal_2020}.
\end{proof}

\subsection{Cohesive Proof of Theorem \ref{thm:main_krp_res}}
\label{appendix:cohesive_krp_proof}
\begin{proof}[\unskip\nopunct] 
In this proof, we fully explain Algorithms 
\ref{alg:krp_sampler_construction} and \ref{alg:krp_sampler_sampling}
in the context of the sampling procedure outlined in 
section \ref{sec:krp_leverage_defs}. We verify the complexity claims first and
then prove correctness.

\textbf{Construction and Update:} For each matrix $U_j$,  
Algorithm \ref{alg:krp_sampler_construction} builds an efficient row sampling data structure $Z_j$ as specified by Lemma 
\ref{lemma:efficient_row_sampler}. We let the p.s.d.\ matrix $Y$ that parameterizes each sampler be a matrix of ones, and we set $F = R$. From Lemma
\ref{lemma:efficient_row_sampler}, the time to construct sampler $Z_j$ is
$O(I_j R^2)$. The space used by sampler $Z_j$ is
$O(\ceil{I_j / F} R^2) = O(I_j R)$, since $F = R$.
In case $I_j < R$, we use the special case described in
Appendix \appendixref{appendix:efficient_lemma_proof} to get a space overhead 
$O(1)$, avoiding a term $O(R^2)$ in the space complexity.

Summing the time and space complexities over all $j$ proves part 1 of the theorem. To update the data structure if
matrix $U_j$ changes, we only need to rebuild sampler $Z_j$ for a cost of
$O(I_j R^2)$. The construction phase also computes and stores the Gram 
matrix $G_j$ for each matrix $U_j$. We defer the update procedure in case a single
entry of matrix $U_j$ changes to Appendix 
\appendixref{appendix:efficient_single_element_updates}.

\textbf{Sampling: } For all indices $k$ (except possibly $j$), lines 1-5 from 
Algorithm \ref{alg:krp_sampler_sampling} compute $G_{>k}$ and its
eigendecomposition. Only a single pass over the Gram
matrices $G_k$ is needed, so these steps cost $O(R^3)$ for each index $k$. Line 5 builds
an efficient row sampler $E_k$ for the matrix of scaled eigenvectors 
$\sqrt{\Lambda_k} \cdot V_k$. For sampler $k$, we set $Y = G_k$ with cutoff
parameter $F = 1$. From Lemma \ref{lemma:efficient_row_sampler}, the construction cost is 
$O(R^3)$ for each index $k$, and the space required by each sampler is $O(R^3)$. Summing these quantities over all $k \neq j$ gives asymptotic runtime $O(NR^3)$ for lines 2-5. 

The loop spanning lines 6-12 draws $J$ row indices from the Khatri-Rao product $U_{\neq j}$. For
each sample, we maintain a ``history vector'' $h$ to write the variables $h_{<k}$ from
Equation \eqref{eq:h_leq_k_defn}. For each index $k \neq j$, we draw random 
variable $\hat u_k$ using the
row sampler $E_k$. This random draw indexes a scaled eigenvector 
of $G_{>k}$. We then use the
history vector $h$ multiplied by the eigenvector to sample a row
index $\hat t_k$ using data structure $Z_k$. The history 
vector $h$ is updated, and we proceed to draw the next index 
$\hat t_k$. 

As written, lines 2-5 also incur
scratch space usage $O(NR^3)$.
The scratch space can
be reduced to $O(R^3)$ by
exchanging the order of loops on
line 6 and line 8 and allocating
$J$ separate history vectors $h$,
once for each draw. Under this
reordering, we perform all $J$ draws for 
each variable $\hat u_k$ and $\hat t_k$
before moving to $\hat u_{k+1}$ and
$\hat t_{k+1}$. In this case, only
a single data structure $E_k$ is required
at each iteration of the outer loop, 
and we can avoid building all
the structures in advance on line 5.
We keep the algorithm in the form
written for simplicity, but we 
implemented the memory-saving approach
in our code.

From Lemma 
\ref{lemma:efficient_row_sampler}, lines 9 and 10 cost $O(R^2 \log R)$ and 
$O\paren{R^2 \log \ceil{I_k / R}}$,
respectively. Line 11 costs $O(R)$ and contributes a lower-order term. Summing over all $k \neq j$,
the runtime to draw a single sample is 
\[
O\paren{\sum_{k \neq j} ( R^2 \log \ceil{I_k / R} + R^2 \log R ) }
= O\paren{\sum_{k \neq j} R^2 \log \max \paren{I_k, R}}.
\]
Adding the runtime for all $J$ samples to the runtime of the loop spanning lines 2-6 gives
runtime $O\paren{NR^3 + J \sum_{k \neq j} R^2 \log \max \paren{I_k, R}}$, and 
the complexity claims have been proven.

\textbf{Correctness: } We show correctness for the case where $j=-1$
and we sample from the Khatri-Rao product of all matrices $U_k$,
since the proof for any other value of $j$ requires a simple 
reindexing of matrices. To show that our sampler is correct, it is enough
to prove the condition that for $1 \leq k \leq N$,
\begin{equation}
p(\hat t_k = t_k\ \vert\ h_{<k}) = q_{h_{<k}, U_k, G_{>k}} \br{t_k},
\label{eq:corr_condition}
\end{equation}
since, by Theorem \ref{thm:malik2022}, 
$p(\hat s_k = s_k\ \vert\ \hat s_{<k} = s_{<k}) = q_{h_{<k}, U_k, G_{>k}} \br{s_k}$.
This would imply that the joint random variable $(\hat t_1, ..., \hat t_{N})$ 
has the same probability distribution as $(\hat s_1, ..., \hat s_{N})$,
which by definition follows the leverage score distribution on
$U_1 \odot ... \odot U_{N}$. To prove 
the condition in Equation \eqref{eq:corr_condition}, we apply Equations \eqref{eq:first_dist}
and \eqref{eq:second_conditional_dist} derived earlier:
\begin{equation}
\begin{aligned}
p(&\hat t_k = t_k\ \vert\ h_{<k}) \\
&=\sum_{u_k=1}^R p(\hat t_k = t_k\ \vert\ \hat u_k = u_k, h_{<k}) p(\hat u_k = u_k
\ \vert\ h_{<k}) &\just{Bayes' Rule} \\
&=\sum_{u_k=1}^R w \br{u_k} \frac{W \br{t_k, u_k}}{\norm{W \br{:, u_k}}_1} 
&\just{Equations \eqref{eq:first_dist} and \eqref{eq:second_conditional_dist}, in
reverse} \\
&= q_{h_{<k}, U_k, G_{>k}} \br{t_k}.
\end{aligned}
\end{equation}
\end{proof}

\subsection{Efficient Single-Element Updates}
\label{appendix:efficient_single_element_updates}
Applications such as CP decomposition typically change all entries of a single matrix $U_j$ between
iterations, incurring an update cost $O(I_j R^2)$
for our data structure from Theorem \ref{thm:main_krp_res}. In case
only a single element of $U_j$ changes, our data structure can be updated
in time $O \paren{R \log I_j}$.
\begin{proof}
Algorithm \ref{alg:singlerow_update} gives the procedure when the update 
$U_j\br{r, c} := \hat u$ is performed. The matrices $G^v$ refer to the partial Gram
matrices maintained by each node $v$ of the segment trees in our data structure, and
the matrix $\tilde U_j$ refers to the matrix $U_j$ before the update operation.
\begin{algorithm}
   \caption{UpdateSampler($j, r, c, \hat u$)} 
   \label{alg:singlerow_update}
\begin{algorithmic}[1]
        \STATE Let $u = \tilde U_j\br{r, c}$
        \STATE Locate $v$ such that $r \in S(v)$
        \STATE Update $G^v\br{c, :} \pluseq (\hat u - u) \tilde U_j \br{r, :}$  
        \STATE Update $G^v\br{:, c} \pluseq (\hat u - u) \tilde U_j \br{r, :}^\top$  
        \STATE Update $G^v\br{c, c} \pluseq (\hat u - u)^2$ 
        \WHILE{$v \neq \textrm{root}(T_{I_j, R})$}
            \STATE $v_{\textrm{prev}} := v$, $v := A(v)$
            \STATE Update $G^v := G^{\textrm{sibling}(v_{\textrm{prev} })} + G^{v_{\textrm{prev}}}$
        \ENDWHILE
\end{algorithmic}
\end{algorithm}

Let $T_{I_j, R}$ be the segment tree 
corresponding to matrix $U_j$ in the data structure, and let $v \in T_{I_j, R}$ be the leaf whose segment contains $r$. Lines 3-5 of the algorithm update the row and column 
indexed by $c$ in the partial Gram matrix held by the leaf node. 

The only other nodes requiring an update are ancestors of $v$, each holding
a partial Gram matrix that is the sum of its two children. Starting from the 
direct parent $A(v)$, the loop on line 6 performs these ancestor updates.
The addition on line 8 only requires time $O(R)$, since only row and column
$c$ change between the old value of $G^v$ and its updated version. 
Thus, the runtime of this procedure is $O(R \log \ceil{I_j  / R})$ 
from multiplying the cost to update a single node by the depth of 
the tree. 
\end{proof}

\subsection{Extension to Sparse Input Matrices}
\label{appendix:sparse_input_extension}
Our data structure is designed to sample from Khatri-Rao products
$U_1 \odot ... \odot U_N$ where the input matrices $U_1, ..., U_N$ are dense,
a typical situation in tensor decomposition. Slight modifications to the
construction procedure permit our data structure to handle sparse matrices
efficiently as well. The following corollary states the result as a 
modification to Theorem \ref{thm:main_krp_res}.

\begin{corollary}[Sparse Input Modification]
\label{cor:sparse_inputs}
When input matrices $U_1, ..., U_N$ are sparse, point 1 of Theorem 
\ref{thm:main_krp_res} can be modified so that the proposed data structure has
$O\paren{R \sum_{j=1}^N \textrm{nnz}(U_j)}$ construction time and 
$O\paren{\sum_{j=1}^N \textrm{nnz}(U_j)}$ storage space. The sampling time
and scratch space usage in point 2
of Theorem \ref{thm:main_krp_res} does 
not change. The single-element
update time in point 1 is likewise unchanged.
\end{corollary}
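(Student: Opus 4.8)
The plan is to change only the $N$ per-factor row samplers $Z_j = \textrm{BuildSampler}(U_j, F=R, \br{1})$ constructed in Algorithm \ref{alg:krp_sampler_construction}. Every other ingredient of the data structure of Theorem \ref{thm:main_krp_res} — the Gram matrices $G_j$, the symmetric eigendecompositions of the matrices $G_{>k}$, and the auxiliary samplers $E_k$ built from the $R \times R$ matrices $\sqrt{\Lambda_k} V_k^\top$ — operates only on dense $R \times R$ objects and is indifferent to sparsity of the inputs, so these steps (hence the entire sampling phase and its $O(R^3)$ scratch space) are unchanged by construction. Thus it suffices to give a sparse-input variant of the segment-tree data structure of Lemma \ref{lemma:efficient_row_sampler}, specialized to the way $Z_j$ uses it (cutoff $F=R$, rank-1 matrix $Y=\br{1}$), with construction time $O(R\,\nnz{U})$ and storage $O(\nnz{U})$ that still answers each query in $O(R^2\log\ceil{I/R} + R^2)$ time.

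The heart of the argument is a re-blocking of the rows. I would first discard every identically-zero row of $U$: such a row contributes the zero matrix to every partial Gram matrix and has probability zero under $q_{h,U,Y}$, so it is never sampled and need not appear in the tree. Letting $r_1 < \dots < r_m$ (with $m \le \nnz{U}$) be the nonzero rows, I would greedily coalesce them into contiguous \emph{blocks}, closing a block as soon as adding the next row would push its total nonzero count past $R^2$. Because each nonzero row has at most $R$ nonzeros, every block but the last holds more than $R^2 - R \ge R^2/2$ nonzeros, so there are only $B = O(1 + \nnz{U}/R^2)$ blocks; and since $\nnz{U} \le IR$ this is simultaneously $O(\ceil{I/R})$. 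I would build a full, complete binary tree over these $B$ blocks — one block per leaf — caching at every node $v$ the partial Gram matrix $G^v = \sum_{i \in S(v)} U\br{i,:}^\top U\br{i,:}$ exactly as in Algorithm \ref{alg:row_sampler_construction}, and additionally storing at each leaf the list of true row indices of the nonzero rows it contains (a total of $m \le \nnz{U}$ integers) so the sampler can translate a local index back to a row of $U$.

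With that in place the accounting is routine. Storage is $O(BR^2) = O(R^2 + \nnz{U})$ per factor; the stray $O(R^2)$ is eliminated exactly as in the $I<F$ special case of Lemma \ref{lemma:efficient_row_sampler}, since when $\nnz{U} < R^2$ the tree degenerates to a single leaf, the binary search disappears, and the normalizer $C$ and the leaf probabilities are recomputed directly from the $O(\nnz{U})$ stored entries. Construction is dominated by the $m$ leaf outer products, costing $\sum_i \nnz{U\br{i,:}}^2 \le R\sum_i \nnz{U\br{i,:}} = R\,\nnz{U}$, plus $O(B)$ internal-node additions at $O(R^2)$ each; summing over $j$ gives $O\paren{R\sum_j \nnz{U_j}}$ time and $O\paren{\sum_j \nnz{U_j}}$ space. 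Since the tree depth is $O(\log B) = O(\log\ceil{I/R})$, a sampling query still visits $O(\log\ceil{I/R})$ internal nodes at $O(R^2)$ apiece, and the terminal leaf scan ranges over at most $R^2$ nonzero entries, which for the rank-1 $Y=\br{1}$ costs $O(R^2)$ using the closed form of point 3 of Lemma \ref{lemma:efficient_row_sampler} restricted to each row's support (cf.\ Equation \eqref{eq:second_conditional_dist}); so the per-sample cost is unchanged, and correctness is inherited verbatim from Lemma \ref{lemma:efficient_row_sampler} since we have only removed probability-zero rows and coarsened the leaf granularity. Finally, a single-element update $U_j\br{r,c} := \hat u$ still only alters $G^v$ at the leaf containing $r$ and at its $O(\log\ceil{I_j/R})$ ancestors, each in $O(R)$ time as in Algorithm \ref{alg:singlerow_update} — it may leave that block slightly over its $R^2$ budget, which is harmless — matching the update time in point 1.

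The step I expect to be the main obstacle is arriving at the re-blocking and the bound behind it: one cannot reach $O(\nnz{U})$ storage merely by pruning empty subtrees or by storing the partial Gram matrices sparsely, because a single dense row already induces a full $R \times R$ Gram matrix, so one row per leaf costs $\Theta(R\,\nnz{U})$ regardless of representation. Coalescing rows until a block carries $\Theta(R^2)$ nonzeros is precisely what forces the number of tree nodes down to $O(\nnz{U}/R^2)$ — so the dense $R \times R$ partial Gram matrices aggregate to $O(\nnz{U})$ — while keeping the block count $O(\ceil{I/R})$ so that the tree depth, and hence the sampling time, is untouched; everything downstream of that choice is bookkeeping.
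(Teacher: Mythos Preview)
Your proposal is correct and follows essentially the same approach as the paper: re-partition the row index set so each leaf holds $\Theta(R^2)$ nonzeros, build the segment tree over the resulting $O(1+\nnz{U}/R^2)$ blocks, and observe that this simultaneously bounds storage by $O(\nnz{U})$, construction by $O(R\,\nnz{U})$, and keeps the tree depth at most $O(\log\ceil{I/R})$ so sampling and single-element updates are unchanged. The only cosmetic difference is that the paper keeps the leaf segments as contiguous index ranges $\{S_0(v)..S_1(v)\}$ (zero rows included) rather than explicitly discarding zero rows and storing per-leaf index lists as you do; both choices yield the same complexities.
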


\begin{proof}
We will modify the data structure in Lemma \ref{lemma:efficient_row_sampler}.
The changes to its construction and storage costs will propagate to
our Khatri-Rao product sampler, which maintains one of these data structures
for each input matrix. 

Let us restrict ourselves to the case $F = R, Y=\br{1}$ in relation to
the data structure in Lemma \ref{lemma:efficient_row_sampler}.
These choices for $F$ and $Y$ are used in the construction phase 
given by Algorithm \ref{alg:krp_sampler_construction}. The proof in
Appendix 
\appendixref{appendix:efficient_lemma_proof} constrains each leaf
$v$ of a segment tree $T_{I, F}$ to hold a contiguous segment 
$S(v) \subseteq \br{I}$ of cardinality at most $F$. Instead, choose each segment 
$S(v) = \set{S_0(v)..S_1(v)}$ so that $U\br{S_0(v):S_1(v), :}$ has
at most $R^2$ nonzeros, and the leaf count of the tree is 
at most $\ceil{\textrm{nnz}(U) / R^2} + 1$ for input matrix 
$U \in \RR^{I \times R}$. Assuming the nonzeros of $U$ are sorted
in row-major order, we can construct such a partition of $\br{I}$ into segments in time $O(\textrm{nnz}(U))$ by iterating in order through the nonzero 
rows and adding each of them to a ``current'' segment. We shift to
a new segment when the current segment cannot hold any more nonzeros.

This completes the modification to the data structure in
Lemma \ref{lemma:efficient_row_sampler}, and we now analyze its
updated time / space complexity.

\textbf{Updated Construction / Update 
Complexity of Lemma 
\ref{lemma:efficient_row_sampler}, $F=R, Y=\br{1}$}: Algorithm 
\ref{alg:row_sampler_construction} constructs the partial Gram matrix
for each leaf node $v$ in the segment tree. Each nonzero in the segment
$U\br{S_0(v):S_1(v), :}$ contributes time $O(R)$ during line 3 of 
Algorithm \ref{alg:row_sampler_construction} to update a single 
row and column of $G^v$. Summed over all leaves, the cost of line 3
is $O(\textrm{nnz}(U) R)$. The remainder of the construction procedure
updates the partial Gram matrices of all internal nodes. Since there
are at most $O\paren{\ceil{\textrm{nnz}(U) / R^2}}$ internal nodes 
and the addition on line 6 costs $O(R^2)$ per node, the remaining 
steps of the construction procedure cost $O(\textrm{nnz}(U))$, a
lower-order term. The construction time is therefore 
$O(\textrm{nnz}(U) R)$.

Since we store a single partial Gram matrix of size $R^2$ at each
of $O\paren{\ceil{\textrm{nnz}(U) / R^2}}$ internal nodes,
the space complexity of our modified data structure is 
$O(\textrm{nnz}(U))$.

Finally, the data structure update time 
in case a single element of $U$ is modified does not change from 
Theorem \ref{thm:main_krp_res}. Since the
depth of the segment tree
$\ceil{\textrm{nnz}(U) / R^2} + 1$ is 
upper-bounded by $\ceil{I / R} + 1$, the
runtime of the update procedure in 
Algorithm \ref{alg:singlerow_update} stays the same.

\textbf{Updated Sampling Complexity of 
Lemma \ref{lemma:efficient_row_sampler}, $F=R, Y=\br{1}$}: 
The procedure
``RowSample'' in Algorithm \ref{alg:row_sampler_sampling} now conducts
a traversal of a tree of depth $O(\ceil{\textrm{nnz}(U) / R^2})$.
As a result, we can still upper-bound 
the number of calls to procedure $\tilde m$
as $\ceil{I / F}$. The runtime of procedure $\tilde m$ is unchanged.
The runtime of procedure $\tilde q$ for leaf node $c$ is dominated 
by the matrix-vector multiplication $U\br{S_0(c):S_1(c), :} \cdot h$.
This runtime is $O\paren{\textrm{nnz} \paren{U\br{S_0(c):S_1(c), :}}} \leq 
O\paren{R^2}$. Putting these facts together, the sampling complexity
of the data structure in Lemma \ref{lemma:efficient_row_sampler} does
not change under our proposed modifications for $F=R, Y=\br{1}$.

\textbf{Updated Construction Complexity of Theorem \ref{thm:main_krp_res}}:
Algorithm \ref{alg:krp_sampler_construction} now requires 
$O\paren{R \sum_{j=1}^N \textrm{nnz}(U_j)}$ construction time and 
$O\paren{\sum_{j=1}^N \textrm{nnz}(U_j)}$ storage space, summing the
costs for the updated structure from Lemma \ref{lemma:efficient_row_sampler}
over all matrices $U_1, ..., U_N$. The sampling complexity of these
data structures is unaffected by the modifications, which completes
the proof of the corollary.
\end{proof}

\subsection{Alternating Least Squares CP Decomposition}
\label{appendix:als_cp_decomp}

\paragraph{CP Decomposition.}
CP decomposition represents an $N$-dimensional tensor
$\tilde{\scr T} \in \RR^{I_1 \times ...
\times I_n}$ 
as a weighted sum of generalized
outer products. Formally, let
$U_1, ..., U_N$ with 
$U_j \in \RR^{I_j \times R}$ be factor
matrices with each column having unit norm, 
and let $\sigma \in \RR^R$ be a nonnegative
coefficient vector. We call $R$ the rank of
the decomposition. The tensor $\tilde{\scr T}$ that
the decomposition represents is given
elementwise by
\[
\tilde{\scr T}\br{i_1, ..., i_N} :=
\gen{\sigma^\top, 
U_1\br{i_1, :}, ..., U_N\br{i_N, :}}
= \sum_{r=1}^R \sigma\br{r} U_1[i_1, r] \cdots U_N[i_N, r],
\]
which is a generalized inner product between
$\sigma^\top$ and rows $U_j\br{i_j, :}$ for 
$1 \leq j \leq N$.
Given an input tensor $\scr T$ and a target
rank $R$, the goal of approximate CP decomposition
is to find a rank-$R$ representation $\tilde{\scr T}$
that minimizes the Frobenius norm
$\norm{\scr T - \tilde{\scr T}}_F$.

\paragraph{Definition of Matricization.}
The matricization $\textrm{mat}(\scr T, j)$ flattens tensor 
$\scr T \in \RR^{I_1 \times ... \times I_N}$ into 
a matrix and isolates mode $j$ along the row axis of the output. The output
of matricization has dimensions $I_j \times \prod_{k \neq j} I_k$.
We take the formal definition below from a survey by \citet{kolda_tensor_overview}.
The tensor entry $\scr T\br{i_1, ..., i_N}$ is equal to the matricization
entry $\textrm{mat}(\scr T, j)\br{i_j, u}$, where
\begin{equation*}
u = 1 + \sum_{\substack{k=1 \\ k \neq j}}^N (i_k - 1) \prod_{\substack{m=1 \\ m\neq j}}^{k-1} I_m.
\end{equation*}

\paragraph{Details about Alternating
Least Squares.}
Let $U_1, ..., U_N$ be factor matrices of a low-rank CP
decomposition, $U_k \in \RR^{I_k \times R}$. We use
$U_{\neq j}$ to denote $\bigodot_{k=N, k \neq j}^{k=1} U_k$. Note the
inversion of order here to match
indexing in the definition of
matricization above. 
Algorithm \ref{alg:als_algorithm} gives
the non-randomized alternating least squares algorithm
CP-ALS that produces a decomposition of target rank $R$ given input 
tensor $\scr T \in \RR^{I_1 \times ... \times I_N}$
in general format. The random initialization on line 1 of
the algorithm can be
implemented by drawing each entry of the factor matrices $U_j$ 
according to a standard normal distribution, or via a randomized range 
finder \cite{halko_rrf}. The vector $\sigma$ stores the generalized
singular values of the decomposition. At iteration $j$ within a round, ALS
holds all factor matrices except $U_j$ constant and solves a linear-least
squares problem on line 6 for a new value for $U_j$. In between least squares
solves, the algorithm renormalizes the columns of each matrix $U_j$ to unit
norm and stores their original norms in the vector $\sigma$. Appendix 
\appendixref{appendix:sparse_cp}
contains more 
details about the randomized range finder and the convergence criteria 
used to halt iteration.

\begin{algorithm}
   \caption{CP-ALS($\scr T$, $R$)} 
   \label{alg:als_algorithm}
\begin{algorithmic}[1]
        \STATE Initialize $U_j \in \RR^{I_j \times R}$
        randomly for $1 \leq j \leq N$.

        \STATE Renormalize $U_j\br{:, i} \diveq \norm{U_j\br{:, i}}_2 $, 
        $1 \leq j \leq N, 1 \leq i \leq R$.

        \STATE Initialize $\sigma \in \RR^{R}$ to $\br{1}$. 

        \WHILE{not converged}
            \FOR{$j=1...N$}
                \STATE $U_j := \argmin_X \norm{U_{\neq j} \cdot X^\top 
                -  \textrm{mat}(\scr T, j)^\top}_F $

                \STATE $\sigma \br{i} = \norm{U_j \br{:, i}}_2$, $1 \leq i \leq R$

                \STATE Renormalize $U_j\br{:, i} \diveq \norm{U_j\br{:, i}}_2 $, 
                $1 \leq i \leq R$.
            \ENDFOR
        \ENDWHILE

        \STATE \textbf{return} $\br{\sigma; U_1, ..., U_N}$.
\end{algorithmic}
\end{algorithm}
We obtain a randomized algorithm for sparse tensor CP decomposition by replacing
the exact least squares solve on line 6 with a randomized method according to
Theorem \ref{thm:lev_score_lowerbounds}. Below, we prove
Corollary \ref{cor:downsampled_cpals_complexity}, which derives the complexity
of the randomized CP decomposition algorithm.

\begin{proof}[Proof of Corollary \ref{cor:downsampled_cpals_complexity}]
The design matrix $U_{\neq j}$ for optimization problem $j$ within a round of ALS has dimensions
$\prod_{k \neq j} I_k \times R$. The observation matrix 
$\textrm{mat}(\scr T, j)^\top$ has dimensions 
$\prod_{k \neq j} I_k \times I_j$.  To achieve error threshold 
$1 + \varepsilon$ with probability $1 - \delta$ on each solve, 
we draw $J = \tilde O\paren{R / (\varepsilon \delta)}$ rows from both the 
design and observation matrices and solve the downsampled problem
(Theorem \ref{thm:lev_score_lowerbounds}). These rows
are sampled according to the leverage score distribution on the rows of $U_{\neq j}$, for
which we use the data structure in Theorem \ref{thm:main_krp_res}. After a 
one-time initialization cost $O(\sum_{j=1}^{N} I_j R^2)$) before the ALS 
iteration begins, the complexity to draw $J$ samples (assuming $I_j \geq R$) is 
\[
O\paren{N R^3 + J\sum_{k \neq j} R^2 \log I_k}
=
\tilde O\paren{N R^3 + \frac{R}{\varepsilon \delta} \sum_{k \neq j} R^2 \log I_k}.
\]
The cost to assemble the corresponding
subset of the observation matrix is $O(J I_j) = \tilde O(R I_j / (\varepsilon \delta))$.
The cost to solve the downsampled least squares problem is $O(J R^2) = 
\tilde O(I_j R^2 / (\varepsilon \delta))$, which dominates the cost 
of forming the subset of the observation matrix. Finally, we require additional time
$O(I_j R^2)$ to update the sampling data structure 
(Theorem \ref{thm:main_krp_res} part 1). Adding these terms together and summing 
over $1 \leq j \leq N$ gives
\begin{equation}
\begin{aligned}
\tilde O &\paren{\frac{1}{\varepsilon \delta} \cdot \sum_{j=1}^N \br{
I_j R^2 + \sum_{k \neq j} R^3 \log I_k}} \\
=\tilde O&\paren{\frac{1}{\varepsilon \delta} \cdot \sum_{j=1}^N \br{
I_j R^2 + (N-1) R^3 \log I_j}}. \\
\end{aligned}
\end{equation}
Rounding $N-1$ to $N$ and multiplying by the number of iterations gives the desired complexity.
When $I_j < R$ for any $j$, the complexity changes in Theorem \ref{thm:main_krp_res}
propagate to the equation above. The column renormalization on line 8 of the 
CP-ALS algorithm contributes additional time
$O\paren{\sum_{j=1}^N I_j R}$ per round, a lower-order term. 

\end{proof}
\subsection{Experimental Platform and Sampler Parallelism}
\label{appendix:efficient_parallel_impl}
We provide two implementations of our sampler. The first
is a slow reference implementation written entirely in Python, which closely mimics our
pseudocode and can be used to test correctness. The second is an efficient implementation written in C++, parallelized in shared memory with OpenMP and Intel Thread Building Blocks.

Each Perlmutter CPU node (our experimental platform) is equipped with two sockets, each 
containing an AMD EPYC 7763 processor with 64 cores. All benchmarks were conducted with our 
efficient C++ implementation using 128 OpenMP threads. We link our code against 
Intel Thread Building blocks to call a multithreaded sort function when decomposing sparse 
tensors. We use OpenBLAS 0.3.21 to handle linear algebra with OpenMP parallelism enabled, 
but our code links against any 
linear algebra library implementing the CBLAS and LAPACKE interfaces.

Our proposed data structure samples from the exact distribution of
leverage scores of the Khatri-Rao product, thereby enjoying better sample efficiency 
than alternative approaches such as CP-ARLS-LEV \cite{larsen_practical_2022}. 
The cost to draw each sample, however, is $O(R^2 \log H)$, where $H$ is the number of
rows in the Khatri-Rao product. Methods such as row-norm-squared sampling or CP-ARLS-LEV
can draw each sample in time $O(\log H)$ after appropriate preprocessing. Therefore,
efficient parallelization of our sampling procedure is required for competitive performance,
and we present two strategies below.

\begin{enumerate}
    \item \textbf{Asynchronous Thread Parallelism}: The KRPSampleDraw procedure in Algorithm 
    \ref{alg:krp_sampler_sampling} can be called by multiple threads concurrently without
    data races. The simplest parallelization strategy divides the $J$ samples 
    equally among the threads in a team, each of which makes calls to KRPSampleDraw
    asynchronously. This strategy works well on a CPU, but is less attractive on a SIMT processor like a GPU where 
    instruction streams cannot diverge without significant
    performance penalties.

    \item \textbf{Synchronous Batch Parallelism} As an alternative to the asynchronous
    strategy, suppose for the moment that all leaves have the same depth in each segment tree. Then for every sample, 
    STSample makes a sequence of calls to $\tilde m$, each updating the current
    node by branching left or right in the tree. The length of this sequence is the depth of the tree, and it is followed by a single call to the function $\tilde q$. 
    Observe that procedure $\tilde m$ in Algorithm
    \ref{alg:row_sampler_sampling} can be computed
    with a matrix-vector multiplication
    followed by a dot product. The procedure $\tilde q$ of
    Algorithm \ref{alg:row_sampler_sampling} 
    requires the same two operations if
    $F = 1$ or $Y= \br{1}$. Thus, we can create
    a batched version of our sampling procedure that makes
    a fixed length sequence of calls to batched \verb|gemv|
    and \verb|dot| routines. All processors march in lock-step 
    down the levels of each segment tree, each tracking the branching paths of a
    distinct set of samples. The MAGMA 
    linear algebra library provides a
    batched version of \verb|gemv| \cite{magma_batched}, 
    while a batched dot
    product can be implemented with an ad hoc kernel. MAGMA also offers a batched version
    of the symmetric rank-$k$ update routine \verb|syrk|, 
    which is helpful to parallelize row sampler construction 
    (Algorithm \ref{alg:row_sampler_construction}). When
    all leaves in the tree are not at the same level, the
    the bottom level of the tree can be handled with
    a special sequence of instructions making the 
    required additional calls to $\tilde m$. 
\end{enumerate}
Our CPU code follows the batch synchronous design pattern. 
To avoid dependency on GPU-based MAGMA routines in our CPU prototype, 
portions of the code that should be batched BLAS calls are
standard BLAS calls wrapped in a \verb|for| loop. These sections can be easily replaced when the 
appropriate batched routines are available.

\subsection{Sparse Tensor CP Experimental Configuration}
\label{appendix:sparse_cp}
\begin{table}[h]
\caption{Sparse Tensors from FROSTT collection.}
\label{tab:frostt_datasets}
\vskip 0.15in
\begin{center}
\begin{small}
\begin{sc}
\begin{tabular}{lccll}
\toprule
Tensor & Dimensions & Nonzeros & Prep. & 
Init. \\
\midrule
Uber Pickups & 183 $\times$ 24 $\times$ 1,140 $\times$ 1,717 & 3,309,490 & None & IID \\
Enron Emails & 
6,066 $\times$ 5,699 $\times$ 244,268 $\times$ 1,176
& 54,202,099 & log & RRF \\
NELL-2 & 12,092 $\times$ 9,184 $\times$ 28,818 &
76,879,419 & log & IID \\
Amazon Reviews & 4,821,207 $\times$ 1,774,269 $\times$ 1,805,187 & 1,741,809,018 & None & IID \\ 
Reddit-2015 & 8,211,298 $\times$ 176,962 $\times$ 8,116,559 & 4,687,474,081 & log & IID \\ 
\bottomrule
\end{tabular}
\end{sc}
\end{small}
\end{center}
\vskip -0.1in
\end{table}
Table \ref{tab:frostt_datasets} lists the nonzero counts and dimensions of sparse tensors
in our experiments \cite{frosttdataset}. We took the
log of all values in the Enron, NELL-2, and Reddit-2015
tensors. Consistent with established practice, 
this operation damps the effect of a few 
high magnitude tensor entries 
on the fit metric \cite{larsen_practical_2022}. 

The factor matrices for the Uber, Amazon, NELL-2, and Reddit experiments were initialized with i.i.d.\ entries from the 
standard normal distribution. As suggested by 
\citet{larsen_practical_2022}, the Enron tensor's 
factors were initialized 
with a randomized range finder
\cite{halko_rrf}. The 
range finder algorithm initializes 
each factor 
matrix $U_j$ as 
$\textrm{mat}(\scr T, j) S$, a sketch
applied to the mode-$j$ matricization
of $\scr T$ with 
$S \in \RR^{\prod_{k \neq j} I_k \times R}$. Larsen and Kolda
chose $S$ as a sparse sampling matrix to
select a random subset of fibers
along each mode. We instead used 
an i.i.d.
Gaussian sketching matrix that was
not materialized explicitly. Instead,
we exploited the sparsity of $\scr T$
and noted that at most
$\textrm{nnz}\paren{\scr T}$ columns
of $\textrm{mat}(\scr T, j)$ were 
nonzero. Thus, we computed 
at most $\textrm{nnz}\paren{\scr T}$
rows of the random sketching matrix 
$S$, which were
lazily generated and 
discarded during the matrix
multiplication without
incurring excessive memory overhead. 

ALS was run for a maximum of 40 rounds 
on all tensors except for Reddit, which was run for 80 rounds. The
exact fit was computed every 5 rounds (defined as 1 epoch), and we used an early stopping condition to terminate
runs before the maximum round count. The algorithm was terminated at epoch $T$ if the maximum fit 
in the last 3 epochs did 
not exceed the maximum fit from epoch 1 through epoch $T-3$ by tolerance $10^{-4}$. 

Hybrid CP-ARLS-LEV deterministically includes rows from the Khatri-Rao product whose
probabilities exceed a threshold $\tau$. The ostensible
goal of this procedure is to improve diversity in sample
selection, as CP-ARLS-LEV may suffer from many repeat draws 
of high probability rows. 
We replicated the conditions proposed in
the original work by selecting $\tau = 1 / J$ \cite{larsen_practical_2022}.

Individual trials of non-randomized (exact) ALS 
on the Amazon and
Reddit tensors required several hours on a
single Perlmutter CPU node. To speed up our 
experiments, accuracy measurements for exact ALS 
in Figure \ref{fig:accuracy_bench} were carried 
out using multi-node SPLATT, The 
Surprisingly ParalleL spArse Tensor Toolkit 
\cite{splattsoftware}, on four Perlmutter 
CPU nodes. The fits computed by SPLATT agree with 
those computed by our own non-randomized ALS 
implementation. As a result, Figure 
\ref{fig:accuracy_bench} verifies that our randomized
algorithm STS-CP produces tensor decompositions with
accuracy comparable to those by 
highly-optimized, 
state-of-the-art CP decomposition software.
We leave a distributed-memory
implementation of our \textit{randomized}
algorithms to future work.

\subsection{Efficient Computation of Sketch Distortion}
\label{appendix:distortion}
The definition of $\sigma$ in this section is different from its
definition in the rest of this work. The condition number 
$\kappa$ of a matrix $M$ is defined as
\[
\kappa(M) := \frac{\sigma_\textrm{max}(M)}{\sigma_\textrm{min}(M)}
\]
where $\sigma_\textrm{min}(M)$ and $\sigma_\textrm{max}(M)$ denote
the minimum and maximum nonzero singular values of $M$. 
Let $A$ be a Khatri-Rao product of $N$ matrices $U_1, ..., U_N$ with $\prod_{j=1}^N I_j$ rows, $R$ columns, and rank $r \leq R$. Let 
$A = Q \Sigma V^\top$ be its reduced singular value decomposition
with $Q \in \RR^{\prod_j I_j \times r}, \Sigma \in \RR^{r \times r}$,
and $V \in \RR^{r \times R}$. Finally, let 
$S \in \RR^{J \times \prod_{j} I_j}$ be a leverage score
sampling matrix for $A$. Our goal is to 
compute $\kappa(SQ)$ without fully materializing either 
$A$ or its QR decomposition. We derive
\begin{equation}
    \begin{aligned}
\kappa(SQ)
&= \kappa(SQ \Sigma V^\top V \Sigma^{-1}) \\
&= \kappa(S A V \Sigma^{-1}) \\
    \end{aligned}
    \label{eq:kappa_derivation}
\end{equation}
The matrix $SA \in \RR^{J \times R}$ is efficiently computable 
using our leverage score
sampling data structure. We require time $O(JR^2)$ to multiply it
by $V \Sigma^{-1}$ and compute the singular value decomposition of
the product to get the condition number. Next observe 
that $A^\top A = V \Sigma^2 V^\top$, so we can recover $V$ and 
$\Sigma^{-1}$ by eigendecomposition of 
$A^\top A \in \RR^{R \times R}$ in time $O(R^3)$. 
Finally, recall the formula 
\[
A^\top A = \startimes_{j=1}^N U_j^\top U_j 
\]
used at the beginning of Section \ref{sec:efficient_krp_sampling} that
enables computation of $A^\top A$ in time $O \paren{\sum_{j=1}^N I_j R^2}$
without materializing the full Khatri-Rao product. Excluding the time
to form $SA$ (which is given by Theorem \ref{thm:main_krp_res}),
$\kappa(SQ)$ is computable in time 
\[
O \paren{
JR^2 + R^3 + \sum_{j=1}^N I_j R^2}
.
\]
Plugging $\kappa(SQ)$ into Equation \eqref{eq:distortion_defn} gives an
efficient method to compute the distortion.
\subsection{Supplementary Results}
\label{appendix:supp_results}

\subsubsection{Comparison Against Standard
CP Decomposition Packages}
\label{appendix:cp_baseline_comparison}
Table \ref{tab:baseline-comparison} compares
the runtime per ALS round for our algorithm 
against existing common software packages
for sparse tensor CP decomposition. We
compared our algorithm against 
Tensorly version 0.81 \cite{tensorly} 
 and Matlab Tensor Toolbox 
version 3.5 \cite{ttoolbox_sparse}. We 
compared our algorithm against both
non-randomized ALS and a version of CP-ARLS-LEV
in Tensor Toolbox. 

As demonstrated by 
Table \ref{tab:baseline-comparison}, our
implementation exhibits more than 1000x speedup over 
Tensorly and 295x over
Tensor Toolbox (non-randomized) for the
NELL-2 tensor. STS-CP enjoys a dramatic
speedup over Tensorly because the latter
explicitly materializes the Khatri-Rao
product, which is prohibitively 
expensive given the large
tensor dimensions 
(see Table \ref{tab:frostt_datasets}).

STS-CP consistently exhibits
at least 2.5x speedup over 
the version of CP-ARLS-LEV in Tensor Toolbox,
with more than 10x speedup on the Amazon tensor.
To ensure a fair comparison with CP-ARLS-LEV,
we wrote an improved implementation
in C++ that was used for all other experiments.

\begin{table}
\caption{Average time (seconds) per ALS round 
for our method vs. standard 
CP decomposition packages. OOM indicates an 
out-of-memory error. All experiments were 
conducted on a single LBNL Perlmutter CPU
node. Randomized algorithms were benchmarked 
with $2^{16}$ samples per least-squares solve.}
\label{tab:baseline-comparison}
\begin{center}
\begin{small}
\begin{sc}
\begin{tabular}{llllll}  
    \toprule
    Method & Uber & Enron & NELL-2 & Amazon & Reddit \\ 
    \midrule
    Tensorly, Sparse Backend & 64.2 & OOM & 759.6 & OOM & OOM \\
    Matlab TToolbox Standard & 11.6 & 294.4 & 177.4 & >3600 & OOM \\
    Matlab TToolbox CP-ARLS-LEV & 0.5 & 1.4 & 1.9 & 34.2 & OOM \\
    \bf{STS-CP (ours)} & \bf{0.2} & \bf{0.5} &
    \bf{0.6} & \bf{3.4} & \bf{26.0} \\
    \bottomrule
\end{tabular} 
\end{sc}
\end{small}
\end{center}
\end{table}

\subsubsection{Probability Distribution Comparison}
Figure \ref{fig:distribution_comparison} provides confirmation on a small test problem
that our sampler works as expected. For the Khatri-Rao product of three matrices
$A = U_1 \odot U_2 \odot U_3$, it plots the true distribution of leverage scores 
against a normalized histogram of 50,000 draws from the data structure in Theorem
\ref{thm:main_krp_res}. We choose $U_1, U_2, U_3 \in \RR^{8 \times 8}$ initialized i.i.d.\ from a standard normal distribution with
1\% of all entries multiplied by 10. We observe excellent
agreement between the histogram and the true distribution. 

\begin{figure}[h]
    \centering
    \includegraphics[scale=0.35]{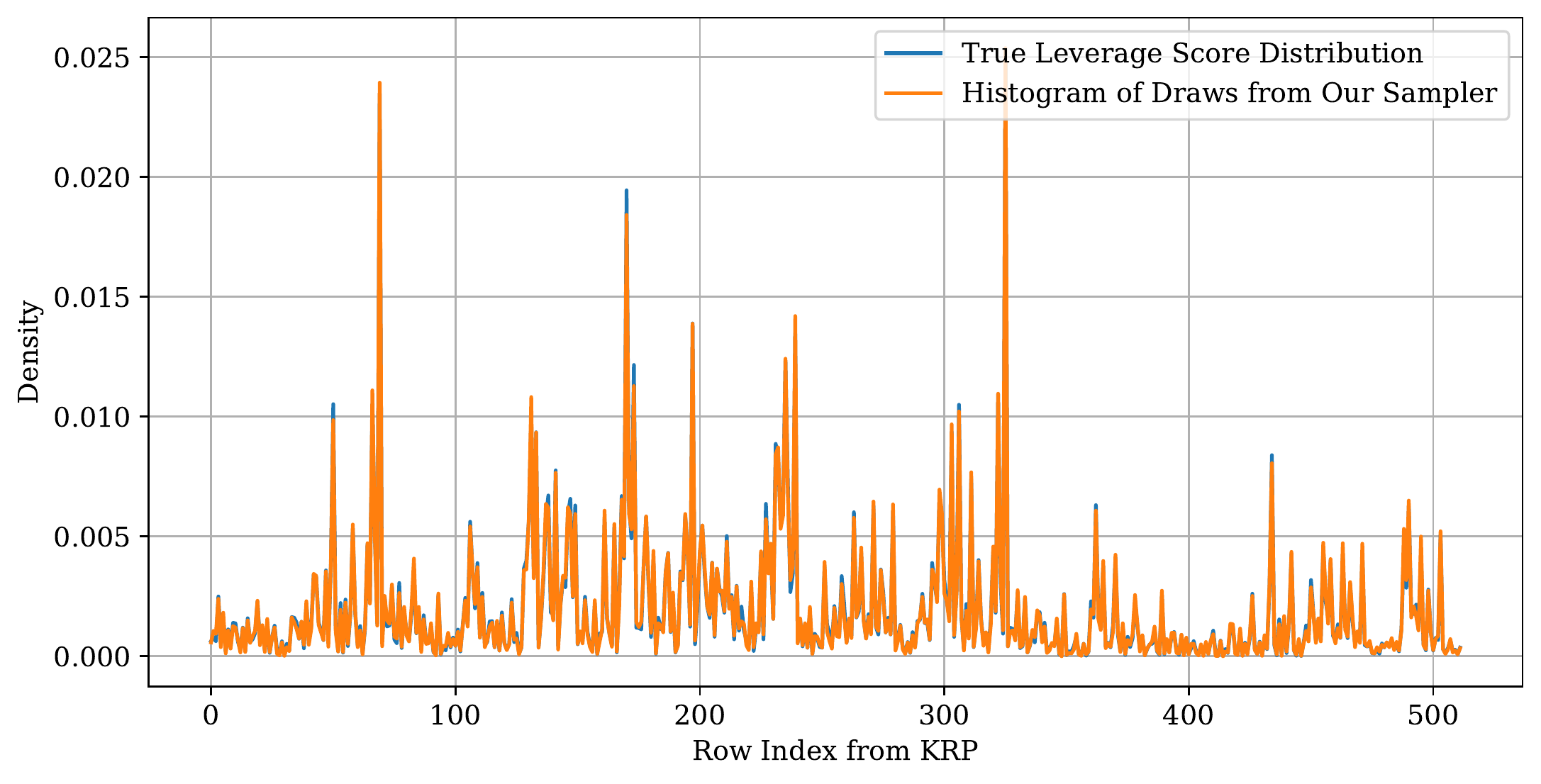}
    \caption{Comparison of true leverage score distribution with histogram of
    50,000 samples drawn from $U_1 \odot U_2 \odot U_3$.}
    \label{fig:distribution_comparison}
\end{figure}

\subsubsection{Fits Achieved for $J=2^{16}$}
Table \ref{tab:accuracies} gives the fits achieved
for sparse tensor decomposition for varying rank and
algorithm (presented graphically in Figure 
\ref{fig:accuracy_comparison}). Uncertainties are 
one standard deviation across 8 runs of ALS.

\begin{table}[h!]
\caption{Fits Achieved by Randomized Algorithms for 
Sparse Tensor Decomposition,
$J = 2^{16}$, and non-randomized ALS. 
The best result among randomized algorithms is 
bolded. ``CP-ARLS-LEV-H'' refers to the hybrid version
of CP-ARLS-LEV and ``Exact'' refers to non-randomized
ALS.}
\label{tab:accuracies}
\vskip 0.15in
\begin{center}
\begin{small}
\begin{sc}
\begin{tabular}{lllll|l}
\\
\toprule
Tensor	& $R$	& CP-ARLS-LEV	& CP-ARLS-LEV-H	& STS-CP (ours)	& Exact	\\
\midrule
\multirow{5}{*}{Uber}	& 25	& .187 $\pm$ 2.30e-03	& .188 $\pm$ 2.11e-03	& \textbf{.189} $\pm$ 1.52e-03	& .190 $\pm$ 1.41e-03	\\
	& 50	& .211 $\pm$ 1.72e-03	& .212 $\pm$ 1.27e-03	& \textbf{.216} $\pm$ 1.18e-03	& .218 $\pm$ 1.61e-03	\\
	& 75	& .218 $\pm$ 1.76e-03	& .218 $\pm$ 2.05e-03	& \textbf{.230} $\pm$ 9.24e-04	& .232 $\pm$ 9.29e-04	\\
	& 100	& .217 $\pm$ 3.15e-03	& .217 $\pm$ 1.69e-03	& \textbf{.237} $\pm$ 2.12e-03	& .241 $\pm$ 1.00e-03	\\
	& 125	& .213 $\pm$ 1.96e-03	& .213 $\pm$ 2.47e-03	& \textbf{.243} $\pm$ 1.78e-03	& .247 $\pm$ 1.52e-03	\\
\midrule
\multirow{5}{*}{Enron}	& 25	& .0881 $\pm$ 1.02e-02	& .0882 $\pm$ 9.01e-03	& \textbf{.0955} $\pm$ 1.19e-02	& .0978 $\pm$ 8.50e-03	\\
	& 50	& .0883 $\pm$ 1.72e-02	& .0920 $\pm$ 6.32e-03	& \textbf{.125} $\pm$ 1.03e-02	& .132 $\pm$ 1.51e-02	\\
	& 75	& .0899 $\pm$ 6.10e-03	& .0885 $\pm$ 6.39e-03	& \textbf{.149} $\pm$ 1.25e-02	& .157 $\pm$ 4.87e-03	\\
	& 100	& .0809 $\pm$ 1.26e-02	& .0787 $\pm$ 1.00e-02	& \textbf{.164} $\pm$ 5.90e-03	& .176 $\pm$ 4.12e-03	\\
	& 125	& .0625 $\pm$ 1.52e-02	& .0652 $\pm$ 1.00e-02	& \textbf{.182} $\pm$ 1.04e-02	& .190 $\pm$ 4.35e-03	\\
\midrule
\multirow{5}{*}{NELL-2}	& 25	& .0465 $\pm$ 9.52e-04	& .0467 $\pm$ 4.61e-04	& \textbf{.0470} $\pm$ 4.69e-04	& .0478 $\pm$ 7.20e-04	\\
	& 50	& .0590 $\pm$ 5.33e-04	& .0593 $\pm$ 4.34e-04	& \textbf{.0608} $\pm$ 5.44e-04	& .0618 $\pm$ 4.21e-04	\\
	& 75	& .0658 $\pm$ 6.84e-04	& .0660 $\pm$ 3.95e-04	& \textbf{.0694} $\pm$ 2.96e-04	& .0708 $\pm$ 3.11e-04	\\
	& 100	& .0700 $\pm$ 4.91e-04	& .0704 $\pm$ 4.48e-04	& \textbf{.0760} $\pm$ 6.52e-04	& .0779 $\pm$ 5.09e-04	\\
	& 125	& .0729 $\pm$ 8.56e-04	& .0733 $\pm$ 7.22e-04	& \textbf{.0814} $\pm$ 5.03e-04	& .0839 $\pm$ 8.47e-04	\\
\midrule
\multirow{5}{*}{Amazon}	& 25	& .338 $\pm$ 6.63e-04	& .339 $\pm$ 6.99e-04	& \textbf{.340} $\pm$ 6.61e-04	& .340 $\pm$ 5.78e-04	\\
	& 50	& .359 $\pm$ 1.09e-03	& .360 $\pm$ 8.04e-04	& \textbf{.366} $\pm$ 7.22e-04	& .366 $\pm$ 1.01e-03	\\
	& 75	& .367 $\pm$ 1.82e-03	& .370 $\pm$ 1.74e-03	& \textbf{.382} $\pm$ 9.13e-04	& .382 $\pm$ 5.90e-04	\\
	& 100	& .366 $\pm$ 3.05e-03	& .371 $\pm$ 2.53e-03	& \textbf{.392} $\pm$ 6.67e-04	& .393 $\pm$ 5.62e-04	\\
	& 125	& .358 $\pm$ 6.51e-03	& .364 $\pm$ 4.22e-03	& \textbf{.400} $\pm$ 3.67e-04	& .401 $\pm$ 3.58e-04	\\
\midrule
\multirow{5}{*}{Reddit}	& 25	& .0581 $\pm$ 1.02e-03	& .0583 $\pm$ 2.78e-04	& \textbf{.0592} $\pm$ 3.07e-04	& .0596 $\pm$ 4.27e-04	\\
	& 50	& .0746 $\pm$ 1.03e-03	& .0738 $\pm$ 4.85e-03	& \textbf{.0774} $\pm$ 7.88e-04	& .0783 $\pm$ 2.60e-04	\\
	& 75	& .0845 $\pm$ 1.64e-03	& .0849 $\pm$ 8.96e-04	& \textbf{.0909} $\pm$ 5.49e-04	& .0922 $\pm$ 3.69e-04	\\
	& 100	& .0904 $\pm$ 1.35e-03	& .0911 $\pm$ 1.59e-03	& \textbf{.101} $\pm$ 6.25e-04	& .103 $\pm$ 7.14e-04	\\
	& 125	& .0946 $\pm$ 2.13e-03	& .0945 $\pm$ 3.17e-03	& \textbf{.109} $\pm$ 7.71e-04	& .111 $\pm$ 7.98e-04	\\
\bottomrule
\end{tabular}
\end{sc}
\end{small}
\end{center}
\vskip -0.1in
\end{table}

\begin{figure}[ht]
     \centering
     \begin{subfigure}[b]{0.48\textwidth}
         \centering
         \includegraphics[scale=0.37]{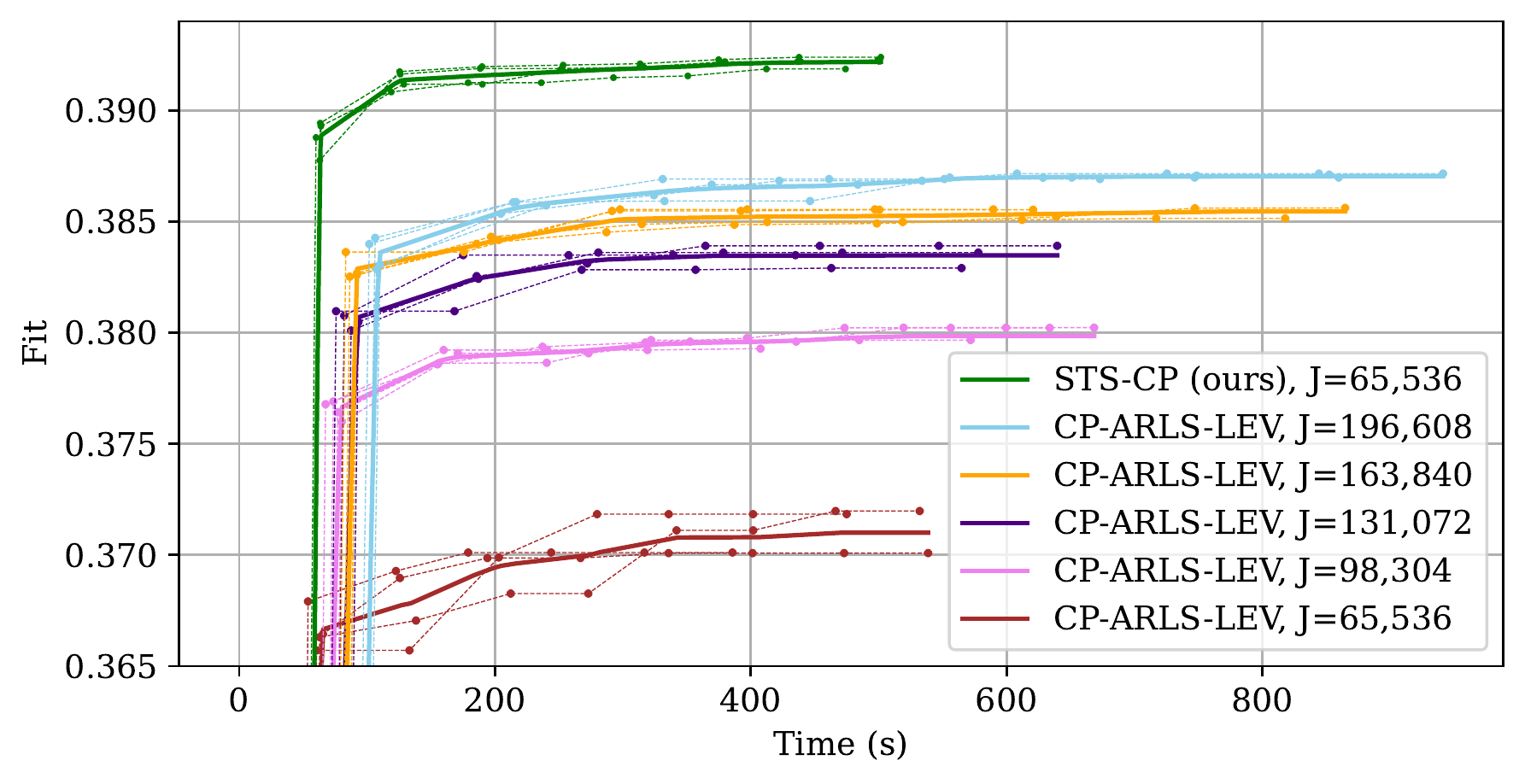}
         \caption{Amazon}
         \label{fig:amazon_fit_function_time}
     \end{subfigure}
     \hfill
     \begin{subfigure}[b]{0.48\textwidth}
         \centering
         \includegraphics[scale=0.37]{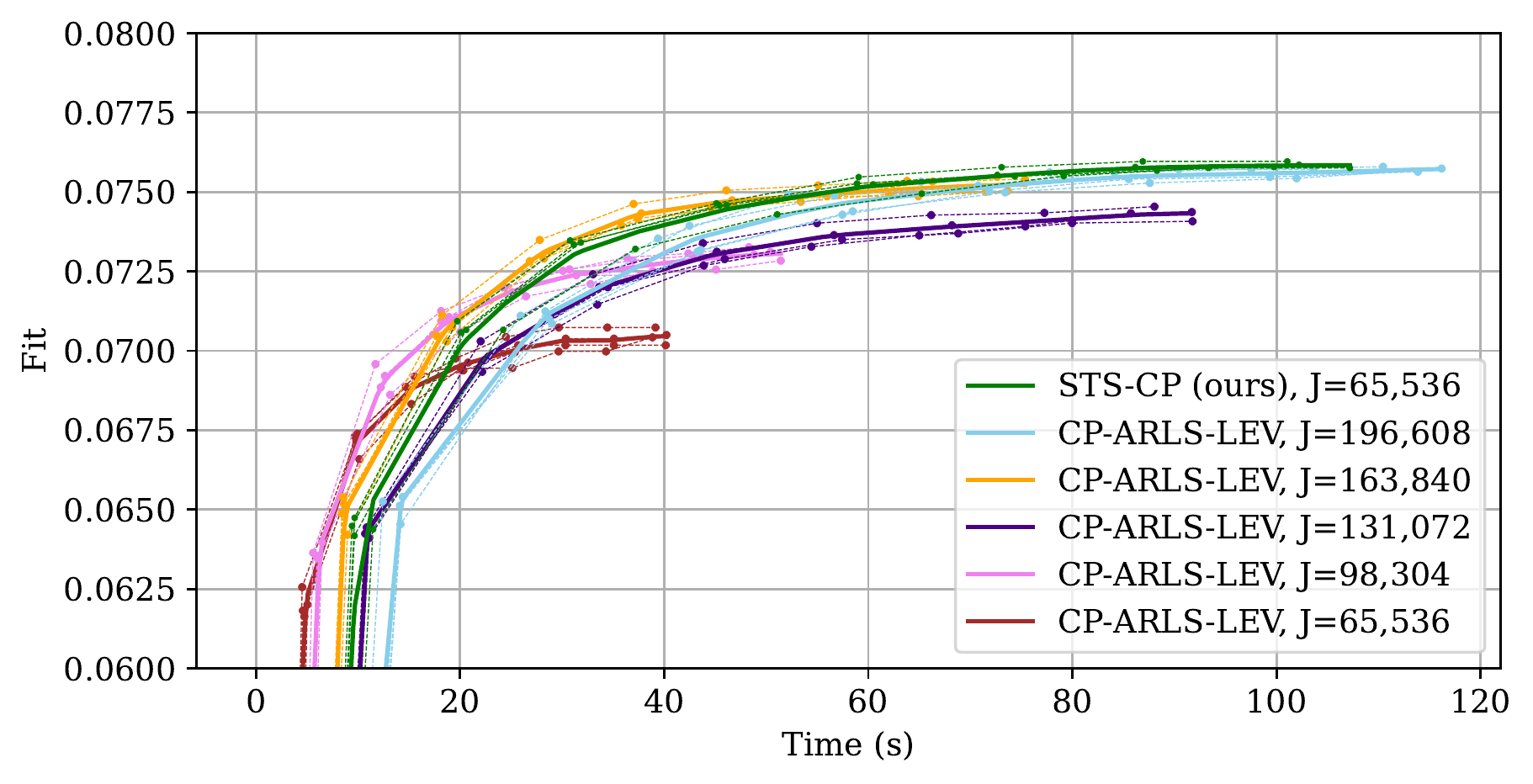}
         \caption{NELL-2}
        \label{fig:nell2_fit_function_time}
     \end{subfigure}
        \caption{Fit as a function of time, $R=100$.}
\end{figure}

\subsubsection{Fit as a Function of Time}
\label{appendix:extra_fit_times}
Figures \ref{fig:amazon_fit_function_time} and \ref{fig:nell2_fit_function_time}
shows the fit as a function of time for the Amazon Reviews and NELL2 tensors.
The hybrid version of CP-ARLS-LEV was used for comparison in both experiments. 
As in section \ref{sec:sparse_tensor_cp_experiments}, thick lines are averages
of the running max fit across 4 ALS trials, shown by the thin dotted lines. For Amazon, 
the STS-CP algorithm makes faster progress than CP-ARLS-LEV at all tested sample counts.

For the NELL-2 tensor, STS-CP makes slower progress 
than CP-ARLS-LEV for sample counts up to $J = 163,840$. On average, these trials
with CP-ARLS-LEV do not achieve the same final fit as STS-CP. 
CP-ARLS-LEV finally achieves a comparable fit to STS-CP when the former uses
$J = 196,608$ samples, compared to $J=65,536$ for our method. 

\subsubsection{Speedup of STS-CP and Practical Usage Guide}
\label{appendix:sts-cp-speedup}

\paragraph{Timing Comparisons.}
For each tensor, we now compare hybrid CP-ARLS-LEV and STS-CP on 
the time required to achieve a fixed fraction of the fit 
achieved by non-randomized ALS. For each tensor and rank in the set
$\set{25, 50, 75, 100, 125}$, we ran both algorithms using a range
of sample counts. We tested STS-CP on values of $J$ from the set
$\set{2^{15}x\ \vert\ 1 \leq x \leq 4}$ for all tensors. 
CP-ARLS-LEV required a sample count that varied significantly
between datasets to hit the required thresholds, and we report 
the sample counts that we tested in Table 
\ref{tab:cp_arls_lev_sample_counts}. Because
CP-ARLS-LEV has poorer sample complexity than STS-CP, 
we tested a wider range of sample counts for
the former algorithm.

\begin{table}[h]
\caption{Tested Sample Counts for hybrid CP-ARLS-LEV}
\label{tab:cp_arls_lev_sample_counts}
\vskip 0.15in
\begin{center}
\begin{small}
\begin{sc}
\begin{tabular}{lcc}
\toprule
Tensor & Values of $J$ Tested \\
\midrule
Uber & $\set{2^{15} x\ \vert\ x \in \set{1..13}}$ \\
Enron & $\set{2^{15} x\ \vert\ x \in \set{1..7} \cup 
\set{10, 12, 14, 16, 18, 20, 22, 26, 30, 34, 38, 42, 46, 50, 54}}$ \\
NELL-2 & $\set{2^{15} x\ \vert\ x \in \set{1..7}}$ \\
Amazon & $\set{2^{15} x\ \vert\ x \in \set{1..7}}$ \\
Reddit & $\set{2^{15} x\ \vert\ x \in \set{1..12}}$ \\
\bottomrule
\end{tabular}
\end{sc}
\end{small}
\end{center}
\vskip -0.1in
\end{table}

For each configuration of tensor, target rank $R$, 
sampling algorithm, and sample count $J$, 
we ran 4 trials using the configuration and 
stopping criteria in Appendix \appendixref{appendix:sparse_cp}. The result
of each trial was a set of $(\textrm{time}, \textrm{fit})$
pairs. For each configuration, we linearly interpolated 
the pairs for each trial and averaged the 
resulting continuous functions over all trials. The
result for each configuration was a function 
$f_{\scr T, R, A, J}: \RR^+ \rightarrow \br{0, 1}$. The
value $f_{\scr T, R, A, J} (t)$ is the average fit at time
$t$ achieved by algorithm $A$ to decompose tensor $\scr T$
with target rank $R$ using $J$ samples per least squares
solve. Finally, let  
\[
\textrm{Speedup}_{\scr T, R, M} := 
\frac{
\min_{J} \textrm{argmin}_{t \geq 0} 
\br{f_{\scr T, R, \textrm{CP-ARLS-LEV-H}, J}(t) > P}
}{
\min_{J} \textrm{argmin}_{t \geq 0} 
\br{f_{\scr T, R, \textrm{STS-CP}, J}(t) > P}
}
\]
be the speedup of STS-CP to over CP-ARLS-LEV (hybrid)
to achieve a threshold fit $P$ on tensor $\scr T$ with
target rank $R$. We let the threshold $P$ for each tensor
$\scr T$ be a fixed fraction of the fit achieved by
non-randomized ALS (see Table \ref{tab:accuracies}).

Figure \ref{fig:tensor_speedup_no_enron} reports the
speedup of STS-CP over hybrid CP-ARLS-LEV for $P=0.95$
on all tensors except Enron. For large tensors with
over one billion nonzeros, we report a significant
speedup anywhere from 1.4x to 2.0x for all tested ranks.
For smaller tensors with less than 100 million nonzeros, 
the lower cost of each least squares solve lessens the
impact of the expensive, more accurate sample selection
phase of STS-CP. Despite this, STS-CP performs
comparably to CP-ARLS-LEV at most ranks, with 
significant slowdown only at rank 25 on the 
smallest tensor Uber. 

\begin{figure}[ht]
     \centering
     \begin{subfigure}[b]{0.48\textwidth}
         \centering
         \includegraphics[scale=0.37]{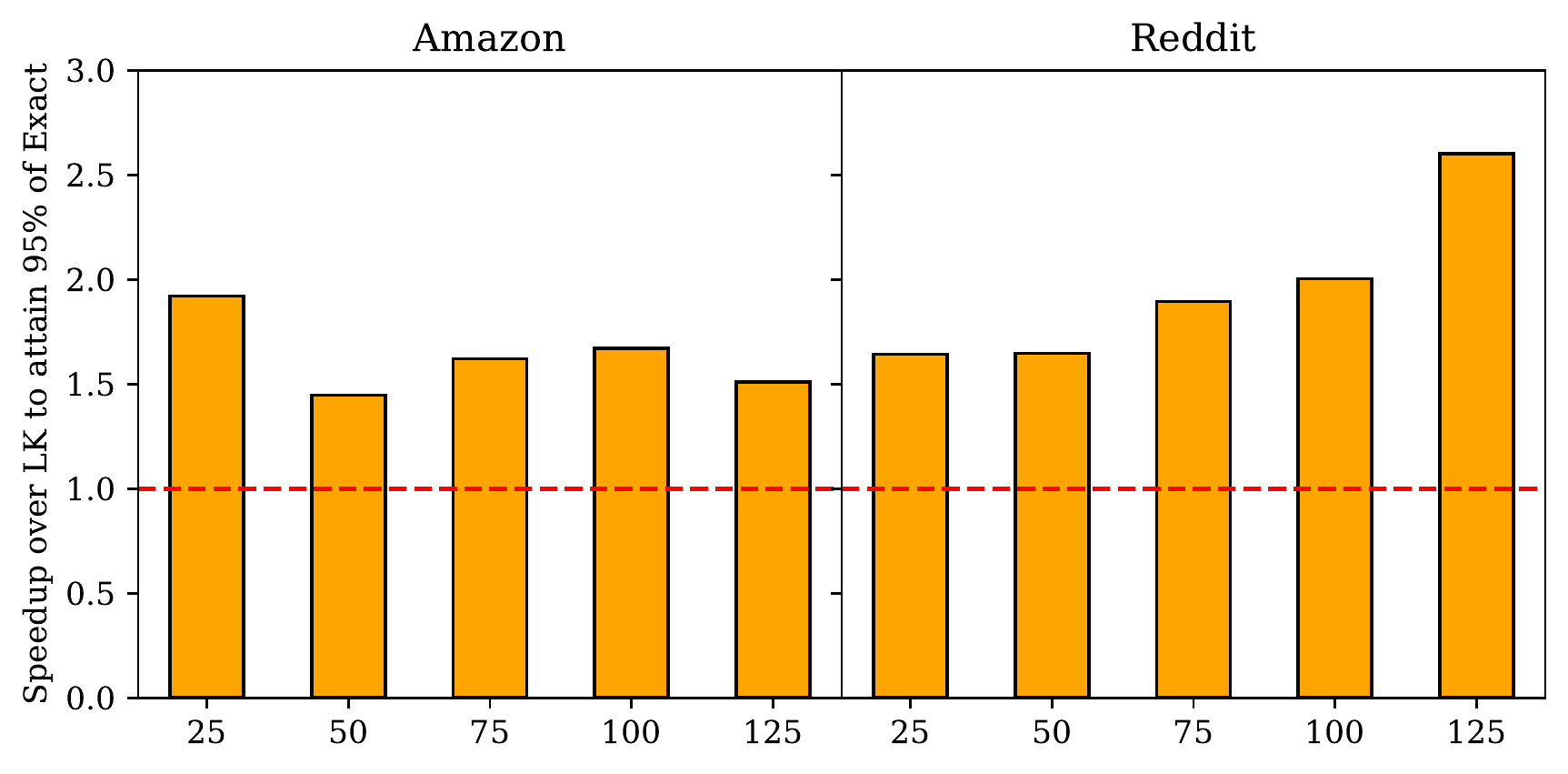}
         \caption{Large Tensors}
         \label{fig:large_tensor_speedup}
     \end{subfigure}
     \hfill
     \begin{subfigure}[b]{0.48\textwidth}
         \centering
         \includegraphics[scale=0.37]{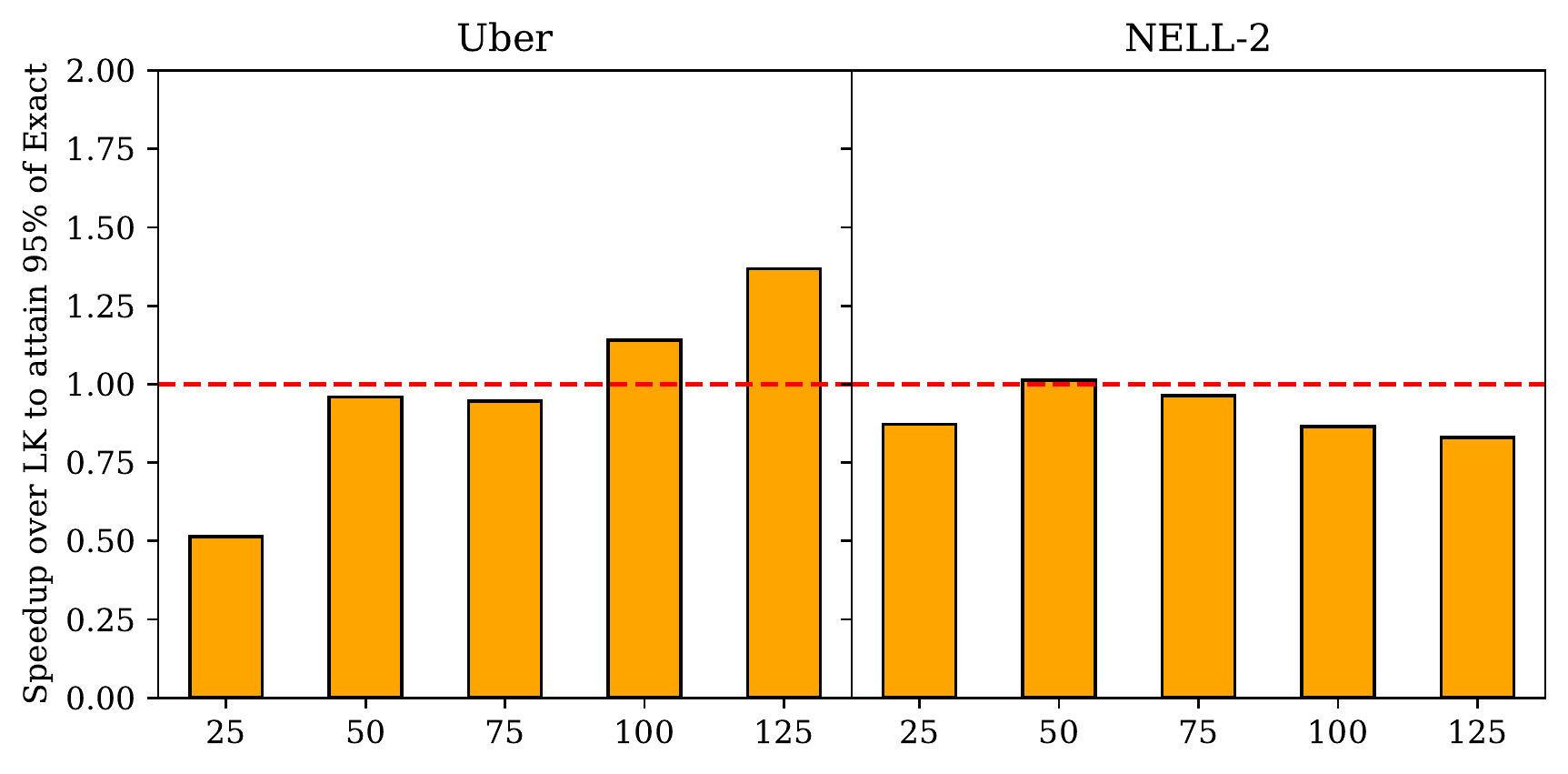}
         \caption{Small Tensors}
        \label{fig:small_tensor_speedup}
     \end{subfigure}
        \caption{Speedup of STS-CP over CP-ARLS-LEV hybrid
        (LK) to reach 95\% of the fit achieved by
        non-randomized ALS. Large tensors have more than
        1 billion nonzero entries.}
    \label{fig:tensor_speedup_no_enron}
\end{figure}

On the Enron tensor, hybrid CP-ARLS-LEV could not achieve 
the 95\% accuracy threshold for any rank above 25 
for the sample counts tested
in Table \ref{tab:cp_arls_lev_sample_counts}. 
\textbf{STS-CP achieved the threshold accuracy for 
all ranks tested}. Instead,
Figure \ref{fig:enron_speedup} reports the speedup
to achieve 85\% of the fit of non-randomized ALS
on the Enron. Beyond rank 25, our method consistently 
exhibits more than 2x speedup to reach the threshold.

\begin{figure}[ht]
    \centering
    \includegraphics[scale=0.4]{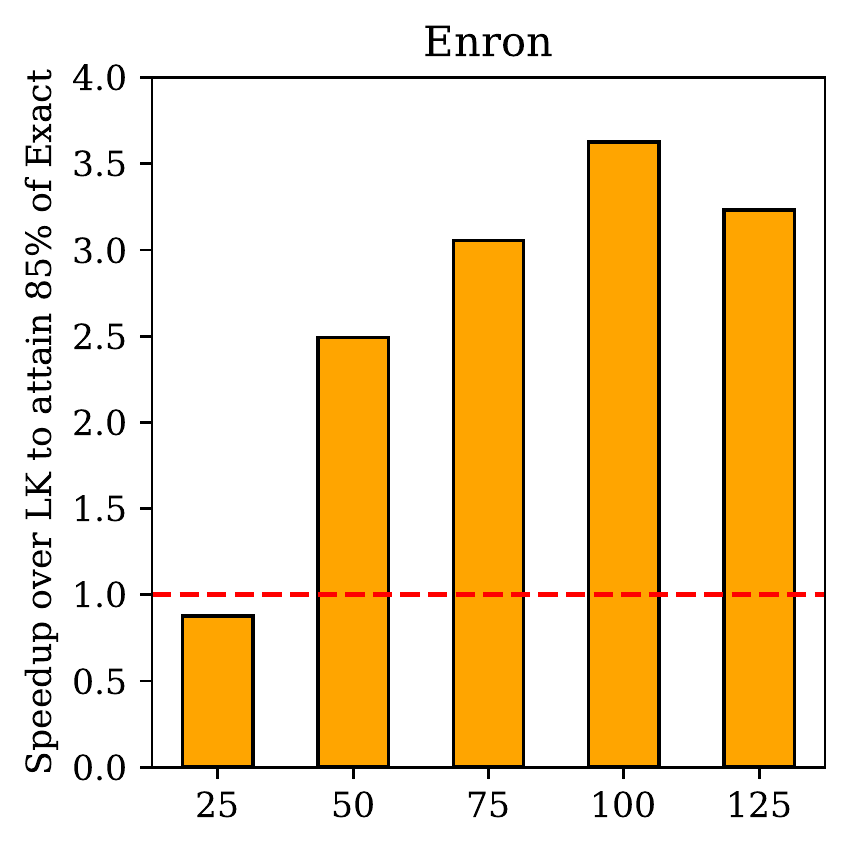}
    \caption{Speedup of STS-CP over CP-ARLS-LEV hybrid (LK)
    to reach 85\% of the fit achieved by
    non-randomized ALS on the Enron Tensor.}
    \label{fig:enron_speedup}
\end{figure}

\paragraph{Guide to Sampler Selection.}
Based on the performance comparisons in this section, we offer
the following guide to CP decomposition algorithm selection. 
Our experiments demonstrate that \textbf{STS-CP offers the 
most benefit on sparse tensors with billions of nonzeros 
(Amazon and Reddit) at high target decomposition rank}. Here, the 
runtime of our more expensive 
sampling procedure is 
offset by reductions in the least squares solve time. For 
smaller tensors, our sampler may still offer significant performance 
benefits (Enron). In other cases (Uber, NELL-2), CP-ARLS-LEV exhibits 
better performance, but by small margins for rank beyond 50.

STS-CP reduces the cost of each least squares solve
through a sample selection process that relies on dense 
linear algebra primitives (see Algorithms 
\ref{alg:row_sampler_construction} and \ref{alg:row_sampler_sampling}).
Because these operations can be expressed as standard
BLAS calls and can be carried out in parallel (see 
Appendix \appendixref{appendix:efficient_parallel_impl}, we
hypothesize that STS-CP is favorable when GPUs or other 
dense linear algebra accelerators are available.

Because our target tensor is sparse, the least squares solve
during each ALS iteration requires a sparse matricized-tensor
times Khatri-Rao product (spMTTKRP) operation. After sampling,
this primitive can reduced to sparse-matrix dense-matrix
multiplication (SpMM). Development of accelerators for
these primitives is an active area of 
research \cite{spMTTKRP_accelerator, spmm_accelerator}. When 
such accelerators are available, the lower cost of the spMTTKRP operation
reduces the relative benefit provided by the STS-CP sample selection
method. We hypothesize that CP-ARLS-LEV, with its faster 
sample selection process but lower sample efficiency, may retain
its benefit in this case. We leave verification of these two
hypotheses as future work.

\end{document}